\newcommand{\R}{\mathbb{R}}
 \newcommand{\BB}{\mathcal{B}}   \newcommand{\DD}{\mathcal{D}}
 \newtheorem{thm}{Theorem}[section]
 \newtheorem{lemma}[thm]{Lemma}
 \newtheorem{cor}[thm]{Corollary}
 \newtheorem{prop}[thm]{Proposition}
 \newtheorem{rem}[thm]{Remark}
 \newtheorem{defin}[thm]{Definition}
 \numberwithin{equation}{section}
\newenvironment{theorem}[2][Theorem]{\begin{trivlist}
\item[\hskip \labelsep {\bfseries #1}\hskip \labelsep {\bfseries #2.}]}{\end{trivlist}}
\theoremstyle{definition}
\begin{document}

\title[On the $T1$ theorem for compactness of Calder\'on-Zygmund operators]{On the $T1$ theorem for compactness of Calder\'on-Zygmund operators}

\author{Mishko Mitkovski}
\address{Mishko Mitkovski\hfill\break\indent 
 School of Mathematical Sciences and Statistics\hfill\break\indent 
 Clemson University\hfill\break\indent 
 105 Sikes Hall\hfill\break\indent 
 Clemson, SC 29634 USA}
\email{mmitkov@clemson.edu}

\author{Cody B. Stockdale}
\address{Cody B. Stockdale\hfill\break\indent 
 School of Mathematical Sciences and Statistics\hfill\break\indent 
 Clemson University\hfill\break\indent 
 105 Sikes Hall\hfill\break\indent 
 Clemson, SC 29634 USA}
\email{cbstock@clemson.edu}

\begin{abstract}
We give a new formulation of the $T1$ theorem for compactness of Calder\'on-Zygmund singular integral operators. In particular, we prove that a Calder\'on-Zygmund operator $T$ is compact on $L^2(\mathbb{R}^n)$ if and only if $T1,T^*1\in \text{CMO}(\mathbb{R}^n)$ and $T$ is weakly compact. Compared to existing compactness criteria, our characterization more closely resembles David and Journ\'e's classical $T1$ theorem for boundedness, avoids technical conditions involving the Calder\'on-Zygmund kernel, and follows from a simpler argument. 
\end{abstract}

\maketitle


\section{Introduction}\label{introduction}
Let $T$ be a singular integral operator formally given by 
$$
    Tf(x)=\int_{\mathbb{R}^n}K(x,y)f(y)\,dy,
$$
where $K$ 
is a kernel function defined away from the diagonal $\{(x,x):x\in\mathbb{R}^n\}$ satisfying the following conditions: there exist $C_K,\delta>0$ such that
\begin{align}\label{kernelsize}
|K(x,y)|\leq \frac{C_K}{|x-y|^n}
\end{align}
whenever $x \neq y$ and
\begin{align}\label{kernelsmoothness}
    |K(x,y)-K(x,y')|, |K(y,x)-K(y',x)|\leq C_K\frac{|y-y'|^{\delta}}{|x-y|^{n+\delta}}
\end{align}
whenever $|y-y'|\leq \frac{1}{2}|x-y|$. Such kernels $K$ 
are called Calder\'on-Zygmund kernels.

The class of singular integral operators associated to Calder\'on-Zygmund kernels that act boundedly on $L^2(\mathbb{R}^n)$ was first characterized by David and Journ\'e in \cite{DJ1984} with their celebrated $T1$ theorem, which is stated as follows. 
\begin{theorem}{A}\emph{
Let $T$ be a singular integral operator associated to a Calder\'on-Zygmund kernel. Then $T$ is bounded on $L^2(\mathbb{R}^n)$ if and only if $T1,T^*1 \in \text{BMO}(\mathbb{R}^n)$ and $T$ is weakly bounded.}
\end{theorem}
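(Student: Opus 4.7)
The plan is to prove the equivalence by handling each direction separately; the forward direction is a routine verification, while the converse requires the technology of paraproducts combined with an almost-orthogonality argument.

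For the necessity, suppose $T$ is bounded on $L^2(\mathbb{R}^n)$. Weak boundedness is immediate from testing against normalized $L^2$-bumps adapted to cubes. To obtain $T1\in \text{BMO}(\mathbb{R}^n)$, fix a cube $Q$ and split $1=\chi_{2Q}+\chi_{(2Q)^c}$: the piece $T(\chi_{2Q})$ is controlled on $Q$ via $L^2$-boundedness and Cauchy-Schwarz, while the oscillation of $T(\chi_{(2Q)^c})$ over $Q$ is bounded uniformly using the kernel smoothness \eqref{kernelsmoothness}, since for $x,x'\in Q$ one has $|T(\chi_{(2Q)^c})(x)-T(\chi_{(2Q)^c})(x')|\lesssim \int_{(2Q)^c}\frac{\ell(Q)^\delta}{|x_Q-y|^{n+\delta}}\,dy\lesssim 1$. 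The same argument applied to $T^*$ yields $T^*1\in\text{BMO}(\mathbb{R}^n)$.

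For the sufficiency, the core idea is to reduce to the case $T1=T^*1=0$ via paraproducts. Given $b\in \text{BMO}(\mathbb{R}^n)$, one constructs a Calder\'on-Zygmund operator $\Pi_b$ (for example, via Littlewood-Paley projections $P_k$ and $Q_k$ as $\Pi_b f=\sum_k Q_k((Q_k b)(P_{k-1}f))$) with the properties: $\Pi_b$ is $L^2$-bounded with norm controlled by $\|b\|_{\text{BMO}}$ (via Carleson embedding together with the John-Nirenberg inequality), $\Pi_b(1)=b$, and $\Pi_b^*(1)=0$. Setting $\tilde T=T-\Pi_{T1}-\Pi_{T^*1}^*$ then yields a weakly bounded Calder\'on-Zygmund operator satisfying $\tilde T(1)=\tilde T^*(1)=0$, so it suffices to establish $L^2$-boundedness in this reduced situation. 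In the reduced case, use a smooth Littlewood-Paley resolution $I=\sum_k Q_k$ and write $T=\sum_{j,k}Q_j T Q_k$; the goal is to invoke the Cotlar-Stein almost-orthogonality lemma, which demands an estimate
\[
\|Q_j T Q_k\|_{L^2\to L^2}+\|Q_k T^* Q_j\|_{L^2\to L^2}\lesssim 2^{-|j-k|\delta'}
\]
for some $\delta'>0$. For $|j-k|$ large, say $j>k$, one exploits the cancellation of $Q_k$ together with the kernel H\"older estimate \eqref{kernelsmoothness} to gain the factor $2^{-(j-k)\delta}$; for $j=k$ the bound follows from weak boundedness combined with \eqref{kernelsize}.

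The main obstacle is proving this almost-orthogonality estimate: the kernel regularity delivers geometric decay only when the cutoff supports are well-separated from the singular set, so the near-diagonal regime must be handled via weak boundedness, while the off-diagonal decay requires the vanishing conditions $T1=T^*1=0$ precisely to eliminate boundary contributions that would otherwise defeat the cancellation. Once this estimate is in hand, Cotlar-Stein produces $L^2$-boundedness of $\tilde T$, and adding back the paraproducts completes the proof.
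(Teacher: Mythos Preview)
Your sketch is correct and follows the classical line. Note, however, that the paper does not actually prove Theorem~A --- it is quoted as the result of David and Journ\'e \cite{DJ1984}, and Section~\ref{preliminaries} only describes the standard strategy: decompose $T=S+P_{T1}+P_{T^*1}^*$, handle the paraproducts via the $\text{BMO}$--Carleson measure connection, and treat the cancellative piece $S$ using weak boundedness. Your proposal follows exactly this outline.

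The one point of divergence worth flagging is in how the cancellative operator is bounded. You use a discrete Littlewood--Paley resolution $I=\sum_k Q_k$ and invoke Cotlar--Stein via the almost-orthogonality bound $\|Q_jTQ_k\|\lesssim 2^{-|j-k|\delta'}$. The paper's framework (developed for the compactness analogue in Theorem~\ref{CZOlocalization}) instead represents $S$ in a continuous wavelet frame $\{\psi_{(a,b)}\}$ and verifies the Schur-type integral condition~\eqref{Schurcondition} directly from the pointwise matrix-coefficient bounds of Lemma~\ref{matrixcoefficients}. Both approaches rest on the same estimate --- geometric decay of $\langle T\psi_{(a',b')},\psi_{(a,b)}\rangle$ coming from kernel smoothness plus the vanishing moments $T1=T^*1=0$ --- but Cotlar--Stein packages it as operator-norm decay between dyadic scales, while the paper's Schur test packages it as integrability over the $ax+b$ group. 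The continuous-frame route avoids the technicalities of summing the double series and adapts more cleanly to the compactness question (where one needs the tails of these integrals to vanish), which is why the paper prefers it; your Cotlar--Stein argument is the more traditional presentation and is entirely adequate for boundedness alone.
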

\noindent Singular integral operators satisfying the conditions of Theorem A are called Calder\'on-Zygmund operators.

Much more recently, Villarroya characterized the Calder\'on-Zygmund operators that are compact on $L^2(\mathbb{R}^n)$ in \cites{V2015}; there have since been various extensions of this result in \cites{HL2021, OV2017, PPV2017, SVW2022, V2019, CYY2023,CCLLYZ2023}. The compactness of singular integral operators has proven useful in partial differential equations and geometric measure theory. For example, compactness of layer potentials was applied to solve Dirichlet and Neumann problems for Laplace's equation in bounded domains with $C^1$ boundary in \cite{FJR1978}, and compactness of layer potentials and commutators were applied to characterize regular Semmes-Kenig-Toro domains in \cite{HMT2010}.

The $T1$-type theorem for compactness of \cite{V2015} is stated as follows.
\begin{theorem}{B}\emph{
Let $T$ be a Calder\'on-Zygmund operator with kernel $K$. Then $T$ is compact on $L^2(\mathbb{R}^n)$ if and only if $T1,T^*1\in \text{CMO}(\mathbb{R}^n)$, $T$ is weakly compact, and $K$ is a compact Calder\'on-Zygmund kernel.
}
\end{theorem}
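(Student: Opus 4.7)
The plan is to prove both implications by combining the classical $T1$ machinery of Theorem A with an approximation-by-compact-operators argument that exploits the CMO, weak compactness, and compact kernel hypotheses.

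For necessity, since $T$ is compact it is bounded, so Theorem A yields $T1, T^*1 \in \text{BMO}$ and weak boundedness of $T$. To upgrade $T1, T^*1$ to $\text{CMO}$, the idea is to test $T$ against sequences of normalized bumps supported on cubes $Q_k$ whose centers escape to infinity or whose side lengths tend to $0$ or $\infty$, and use the fact that compact operators map such normalized sequences to relatively compact subsets of $L^2$, which after taking appropriate averages produces the vanishing BMO oscillation that characterizes $\text{CMO}$. Weak compactness follows similarly from the relative compactness of the image of normalized bump families under $T$. The compact kernel conditions on $K$ are extracted by pairing $T$ with bumps localized at pairs of distant points and converting the vanishing of these pairings (forced by compactness) into quantitative decay of $K(x,y)$ in the relevant regimes.

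For sufficiency, first apply Theorem A to obtain $L^2$-boundedness of $T$. The main step is to use a wavelet or dyadic Littlewood--Paley decomposition to write
\[
T = \Pi_{T1} + \Pi_{T^*1}^* + R,
\]
where $\Pi_b$ denotes the standard paraproduct with symbol $b$ and the remainder $R$ satisfies $R1 = R^*1 = 0$. Because $T1, T^*1 \in \text{CMO}$, both $\Pi_{T1}$ and $\Pi_{T^*1}^*$ are compact on $L^2$, being norm limits of their finite-scale truncations (which are finite-rank). The remainder $R$ inherits a Calder\'on--Zygmund kernel possessing the compactness properties of $K$, together with weak compactness. For such a paraproduct-free $R$ one approximates in operator norm by integral operators $R_\varepsilon$ with compactly supported continuous kernels: the compact kernel decay controls the far-field and off-diagonal truncation errors, while the near-diagonal truncation is controlled by weak compactness combined with the cancellation $R1 = R^*1 = 0$. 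Each $R_\varepsilon$ is Hilbert--Schmidt, hence compact, and $R$ is the norm limit of the $R_\varepsilon$.

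The main obstacle is the construction of the paraproduct-free remainder $R$ and the verification that its kernel inherits the compact Calder\'on--Zygmund estimates of $K$ after subtracting the paraproducts. This requires a careful choice of wavelet or Haar basis and a quantitative accounting of how the $\text{CMO}$ behavior of $T1$ and $T^*1$ interacts with the size and smoothness bounds \eqref{kernelsize}--\eqref{kernelsmoothness}. It is precisely in this step that previous approaches required imposing testing conditions on elementary operators at every scale, and this is where one expects the new formulation of the paper to streamline the argument.
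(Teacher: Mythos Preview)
The paper does not contain a proof of Theorem B. Theorem B is quoted from Villarroya's work \cite{V2015} as background; the paper's contribution is the distinct Theorem \ref{CZOCompactness}, which drops the compact Calder\'on--Zygmund kernel hypothesis entirely. So there is no ``paper's own proof'' of Theorem B to compare against.

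That said, your sketch is broadly in the spirit of Villarroya's original argument, and it is worth contrasting it with what this paper actually does for Theorem \ref{CZOCompactness}. You decompose $T=\Pi_{T1}+\Pi_{T^*1}^*+R$ and then try to show $R$ is compact by approximating it in operator norm by Hilbert--Schmidt truncations $R_\varepsilon$, using the compact kernel hypothesis to control truncation errors. The step you flag as the main obstacle---verifying that the kernel of $R$ inherits the compact Calder\'on--Zygmund decay after subtracting paraproducts---is genuinely delicate and is exactly where Villarroya's proof becomes long. Your proposal does not resolve it; it only names it.

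The present paper sidesteps this obstacle completely. Rather than tracking kernel decay of the remainder, it shows (Theorem \ref{CZOlocalization}) that any cancellative Calder\'on--Zygmund operator $S$ with $S1=S^*1=0$ is \emph{localized} with respect to a continuous wavelet frame, using only the standard matrix-coefficient estimate of Lemma \ref{matrixcoefficients}. It then proves an abstract result (Theorem \ref{Generalcompactness}) that a bounded localized operator is compact if and only if it is weakly compact. The compact kernel hypothesis never enters. This is why the paper can state a cleaner characterization than Theorem B and why its proof is much shorter: the approximation-by-compact-operators step in your plan is replaced by a Riesz--Kolmogorov criterion (Corollary \ref{RKcorollary}) combined with a Schur-type estimate, and no kernel analysis of the remainder is needed.
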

We refer to \cite{V2015} for the definition compact Calder\'on-Zygmund kernels, and to Section \ref{preliminaries} for definitions of $\text{CMO}(\mathbb{R}^n)$ and weakly compact operators. 

We present a new characterization of the compact Calder\'on-Zygmund operators. 
\begin{thm}\label{CZOCompactness}
A Calder\'on-Zygmund operator $T$ is compact on $L^2(\mathbb{R}^n)$ if and only if  \\ $T1,T^*1 \in \text{CMO}(\mathbb{R}^n)$ and $T$ is weakly compact.
\end{thm}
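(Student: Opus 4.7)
The proof splits naturally into necessity and sufficiency.

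For the \emph{necessity} direction, suppose $T$ is compact on $L^2(\mathbb{R}^n)$. Then $T$ is bounded, so Theorem A yields $T1, T^*1 \in \text{BMO}(\mathbb{R}^n)$, and weak compactness is essentially immediate from $L^2$-compactness. To upgrade to CMO, I would use the standard characterization of CMO as the subspace of BMO with vanishing mean oscillation in the three limits of small scale ($|Q| \to 0$), large scale ($|Q| \to \infty$), and spatial infinity ($|c(Q)| \to \infty$). For each regime, one tests $T1$ against $L^\infty$-normalized mean-zero bumps supported in such cubes; after appropriate rescaling, these form sequences of $L^2$-bounded functions converging weakly to zero, and compactness of $T^*$ converts this into norm convergence of $T^* a_Q$. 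Together with the mean-zero property and the kernel decay \eqref{kernelsmoothness}, this gives vanishing of the relevant pairings $\langle T1, a_Q\rangle = \langle 1, T^*a_Q\rangle$, which is the required characterization of CMO membership.

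For the \emph{sufficiency} direction, assume $T1, T^*1 \in \text{CMO}$ and $T$ is weakly compact. By Theorem A, $T$ is already bounded on $L^2$. I would then perform the David-Journ\'e paraproduct decomposition
\[
T = U + \Pi_{T1} + \Pi_{T^*1}^*,
\]
where $\Pi_b$ is a dyadic paraproduct chosen with $\Pi_b 1 = b$ and $\Pi_b^*1 = 0$, so that $U := T - \Pi_{T1} - \Pi_{T^*1}^*$ is a Calder\'on-Zygmund operator satisfying $U1 = U^*1 = 0$. Paraproducts $\Pi_b$ with $b \in \text{CMO}$ are compact on $L^2$: their Haar expansion $\sum_Q \langle b, h_Q\rangle \langle \cdot\rangle_Q h_Q$ has coefficients $\langle b, h_Q\rangle$ that vanish in each of the three CMO limits, so they can be approximated in operator norm by finite-rank truncations. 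The paraproducts are likewise weakly compact, so $U$ inherits weak compactness. The problem thus reduces to showing that a Calder\'on-Zygmund operator $U$ with $U1 = U^*1 = 0$ and weakly compact is compact on $L^2$.

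This last reduction is where I expect the main obstacle to lie. My plan is to expand $U$ in a Haar basis and estimate the matrix entries $\langle Uh_Q, h_{Q'} \rangle$: the kernel conditions \eqref{kernelsize}--\eqref{kernelsmoothness} combined with the cancellation $U1 = U^*1 = 0$ produce off-diagonal decay, as in the standard proof of boundedness in Theorem A; simultaneously, weak compactness forces smallness of the near-diagonal entries as $|Q|$ or $|Q'|$ tend to $0$ or $\infty$ or when the cubes escape to spatial infinity. Truncating to a finite collection of cubes of intermediate size concentrated near the origin produces a finite-rank approximation of $U$, and a Schur- or Cotlar-type summation of the matrix estimates controls the operator-norm error. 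This proves $U$ is compact, whence $T = U + \Pi_{T1} + \Pi_{T^*1}^*$ is compact as a sum of compact operators.
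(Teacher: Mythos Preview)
Your proposal follows the same architecture as the paper: decompose $T$ as a cancellative piece plus two paraproducts, show the paraproducts are compact from the $\text{CMO}$ hypothesis, and show the cancellative piece is compact by combining almost-orthogonality of its matrix coefficients with the weak compactness hypothesis. The paper packages the last step abstractly, introducing a class of \emph{localized} operators (Schur-type integrability plus vanishing tails of the matrix $\langle S\psi_{(a',b')},\psi_{(a,b)}\rangle$) and then invoking a Riesz--Kolmogorov criterion (Corollary~\ref{RKcorollary}); your finite-rank truncation plus Schur/Cotlar tail estimate is the same mechanism unpacked by hand.

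Two implementation points deserve care. First, the paper deliberately uses a \emph{smooth} continuous wavelet frame and remarks (after Lemma~\ref{matrixcoefficients}) that the Haar system does \emph{not} yield its matrix-coefficient estimate, because the argument exploits a derivative of the wavelet. Haar-based almost-orthogonality under $U1=U^*1=0$ is of course available, but the decay you get is weaker and the bookkeeping more delicate; you should state precisely which bound on $\langle U h_Q,h_{Q'}\rangle$ you are using and check it really sums. Second, the weak compactness hypothesis (Definition~\ref{weakcompdef}) is formulated for bounded subsets of $\mathcal{D}(\mathbb{R}^n)$, and Haar functions are not smooth; you will need a short approximation step (using the $L^2$-boundedness of $U$) to transfer the vanishing of $\langle U f_{(a,b)},g_{(a,b)}\rangle$ to Haar-type test functions before you can conclude smallness of the near-diagonal entries. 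Your necessity argument via the Uchiyama three-limit characterization is more explicit than the paper's one-line appeal to the compact extension $T:L^\infty\to\text{BMO}$ together with the fact that bounded Calder\'on--Zygmund operators send $C_0(\mathbb{R}^n)$ into $\text{CMO}(\mathbb{R}^n)$; either route is fine.
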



While our result and Villaroya's Theorem B are not comparable in the sense that neither is a consequence of the other, our Theorem \ref{CZOCompactness} has the advantage that one does not need to check the additional conditions on the kernel in order to deduce the compactness of $T$. In addition, our proof is significantly shorter and avoids many technical intricacies of the proof of Theorem B in \cite{V2015}. 

Combining Theorem \ref{CZOCompactness} with 
the recent theorems of \cites{HL2021, SVW2022}, one immediately 
obtains the following compactness result for Calder\'on-Zygmund operators on weighted Lebesgue spaces with respect to Muckenhoupt $A_p$ weights. 
\begin{cor}\label{WeightedCorollary}
If $T$ is a weakly compact Calder\'on-Zygmund operator such that $T1,T^*1\in\text{CMO}(\mathbb{R}^n)$, then $T$ is compact on $L^p(\omega)$ for all $p \in (1,\infty)$ and all $\omega \in A_p$. 
\end{cor}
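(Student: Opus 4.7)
The plan is to chain two results, and there is no real technical obstacle: the corollary is essentially a bookkeeping statement. The hypotheses on $T$ are exactly those appearing in the "if" direction of Theorem \ref{CZOCompactness}, so the first step is simply to invoke that theorem (once it is proved in the body of the paper) to conclude that $T$ is compact on $L^2(\mathbb{R}^n)$.

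With unweighted $L^2$-compactness in hand, the second step is to apply a compact-operator extrapolation theorem for Calder\'on-Zygmund operators. The relevant statements are proved in \cite{HL2021} and \cite{SVW2022}: if $T$ is a Calder\'on-Zygmund operator (satisfying the standard size and smoothness conditions \eqref{kernelsize}--\eqref{kernelsmoothness}) that is compact on $L^2(\mathbb{R}^n)$, then $T$ extends to a compact operator on $L^p(\omega)$ for every $p \in (1,\infty)$ and every $\omega \in A_p$. Applying this to the operator $T$ produced by the first step yields the conclusion of Corollary \ref{WeightedCorollary}.

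Because both ingredients are used as black boxes, I do not anticipate any genuine obstacle; the only care needed is to verify that the Calder\'on-Zygmund kernel hypotheses required by the extrapolation theorems in \cite{HL2021} and \cite{SVW2022} are precisely the ones built into the definition of a Calder\'on-Zygmund operator used here, which they are. In particular, one does not need any of the stronger kernel conditions appearing in Theorem B, since the $L^2$ compactness input already comes from Theorem \ref{CZOCompactness} rather than from Villarroya's characterization.
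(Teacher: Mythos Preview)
Your proposal is correct and matches the paper's approach exactly: the corollary is deduced by combining Theorem \ref{CZOCompactness} (to obtain $L^2$-compactness from the hypotheses) with the extrapolation-of-compactness results of \cite{HL2021} and \cite{SVW2022}. There is nothing further to add.
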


The paper is organized as follows. In Section \ref{preliminaries}, we introduce the setting and discuss our strategy. In Section \ref{SectionLocalization}, we characterize the compact operators within a general class of localized operators. In Section \ref{T1Section}, we prove Theorem \ref{CZOCompactness} by showing that cancellative Calder\'on-Zygmund operators 
are localized and by treating appropriate paraproduct terms. 


\section{Preliminaries}\label{preliminaries}

We write $A\lesssim B$ if $A\leq CB$ for some constant $C>0$, and write $A\approx B$ if $A\lesssim B$ and $B \lesssim A$. For $x \in \mathbb{R}^n$ and $r>0$, we set $B(x,r)$ to be the Euclidean ball centered at $x$ of radius $r$. We use $|A|$ to denote the Lebesgue measure of a set $A\subseteq \mathbb{R}^n$. 

Let $\DD(\R^n)$ be the 
space of infinitely differentiable and compactly supported functions on $\R^n$ and $\DD'(\R^n)$ be its dual, the space of distributions. Let $T: \DD(\R^n)\to \DD'(\R^n)$ be a linear operator that is continuous in the sense of distributions and suppose that 
the Schwartz distributional kernel of $T$ coincides with a Calder\'on-Zygmund kernel $K$ in the sense that
$$
    \langle Tf,g\rangle =\int_{\mathbb{R}^n}\int_{\mathbb{R}^n}K(x,y)f(y)g(x)\,dxdy
$$
for some $K:\mathbb{R}^n\times\mathbb{R}^n\setminus\{(x,x):x\in\mathbb{R}^n\}\rightarrow\mathbb{C}$ satisfying \eqref{kernelsize} and \eqref{kernelsmoothness}, and all $f, g \in \DD(\mathbb{R}^n)$ with disjoint supports. The adjoint operator $T^*$ is defined by 
$$
    \langle Tf,g\rangle = \langle f,T^*g\rangle
$$
for $f,g \in \DD(\mathbb{R}^n)$ with disjoint supports. Note that $T^*$ is also a singular integral operator associated to a Calder\'on-Zygmund kernel given by $\widetilde{K}(x,y):=K(y,x)$. 
As mentioned in the introduction, Theorem A provides necessary and sufficient conditions for such operators to extend boundedly on $L^2(\mathbb{R}^n)$. 

The most common strategy used in proving Theorem A involves decomposing $T$ as 
\begin{align*}
    T=S+P_{T1}+P_{T^*1}^*,
\end{align*}
where $S$ is a singular integral operator associated to a Calder\'on-Zygmund kernel satisfying $S1=S^*1=0$, and $P_{T1}$ and $P_{T^*1}^*$ are paraproduct operators. One proves that $S$ is bounded using the weak boundedness assumption, whereas $P_{T1}$ and $P_{T^*1}^*$ are handled by an argument relying on the intimate connection between $\text{BMO}(\mathbb{R}^n)$ and Carleson measures. This strategy can be understood more clearly when the singular integral operator $T$ is represented in a wavelet basis -- from this perspective, the operator $S$ is realized as an almost-diagonal term, and the two paraproducts are viewed as off-diagonal parts of $T$. This point of view, which we here adopt, was promoted by Coifman and Meyer in \cite{CoifmanMeyer}.

Since compactly supported continuous wavelet frames are much easier to construct, we choose to use a continuous frame of wavelets to represent Calder\'on-Zygmund operators, rather than a discrete wavelet basis, in order to further simplify our argument. As usual, the continuous frames of wavelets are indexed by the $ax+b$ group, which is a natural identification of Euclidean balls (or cubes) in $\mathbb{R}^n$ with points in $\mathbb{R}^{n+1}_+:=(0,\infty)\times \mathbb{R}^n$. The $ax+b$ group is the group $(\mathbb{R}^{n+1}_+,*)$ with operation given by
$$
    (a,b)*(a',b'):=(aa',ab'+b)
$$
for $a>0$ and $b \in \mathbb{R}^n$. The identity element of $\mathbb{R}^{n+1}_+$ is $(1,0)$, and inverses are given by $(a,b)^{-1}=\left(\frac{1}{a},-\frac{b}{a}\right)$. We denote the left-Haar measure by $\lambda$; recall that $\lambda$ is given by
$$
    d\lambda(a,b)=\frac{dadb}{a^{n+1}}.
$$ 
The left-invariant metric, d, is the Riemannian metric given by the length element 
$$
    ds^2=\frac{da^2+db^2}{a^2}.
$$
Set 
$$
    D((a,b),r):=\{(a',b')\in\mathbb{R}^{n+1}_+:d((a,b),(a',b'))<r\}
$$
and $D((a,b),r)^c:=\mathbb{R}^{n+1}_+\setminus D((a,b),r).$ We emphasize that $D$ denotes balls in $\mathbb{R}^{n+1}_+$ with respect to the hyperbolic metric $d$ above, and $B$ denotes Euclidean balls in $\R^n$.

Continuous wavelet frames are generated by the unitary representation $U$ of the $ax+b$ group on $L^2(\mathbb{R}^n)$ given by 
$$
    f_{(a,b)}:=U(a,b)f=\frac{1}{a^{n/2}}f\left(\frac{\cdot -b}{a}\right).
$$
Note that $U(a,b)U(a',b')=U((a,b)*(a',b'))$ and that $U(a,b)^*=U(a,b)^{-1}=U(a,b)^{-1}$. If $T$ is a singular integral operator associated to a Calder\'on-Zygmund kernel $K$, then 
$$
    T_{(a,b)}:=U(a,b)^{*}TU(a,b)
$$ 
is a singular integral operator associated to the Calder\'on-Zygmund kernel 
$$
     K_{(a,b)}(x,y):=a^nK(ax+b,ay+b)
$$ 
which also satisfies \eqref{kernelsize} and \eqref{kernelsmoothness} with the same constants as $T$. Thus the $ax+b$ group can be viewed as the group of translations and dilations in $\mathbb{R}^n$, and Calder\'on-Zygmund constants of singular integral operators are invariant under their conjugation. 

Let $\psi \in \mathcal{D}(\mathbb{R}^n)$ be a function (mother wavelet) such that  
\[
    \text{supp}\,\psi \subseteq B(0,1), \quad  \int_{\mathbb{R}^n} \psi(x)\,dx =0, \quad  \text{and}\quad
\int_0^{\infty}\frac{|\widehat{\psi}(x\xi)|}{\xi}\,d\xi=1.
\]
It is well-known that under the above conditions, $\{\psi_{(a,b)}\}_{(a,b)\in\mathbb{R}^{n+1}_+}$ forms a continuous Parseval frame for $L^2(\mathbb{R}^n)$ in the sense that
$$
    f=\int_{\mathbb{R}^{n+1}_+}\langle f,\psi_{(a,b)}\rangle\psi_{(a,b)}\,d\lambda(a,b)
$$
and
$$
    \|f\|_{L^2(\mathbb{R}^n)}^2=\int_{\mathbb{R}^{n+1}_+}|\langle f,\psi_{(a,b)}\rangle|^2\,d\lambda(a,b)
$$
for any $f \in L^2(\mathbb{R}^n)$, see \cite{D1990}.  Here and everywhere below we use $\langle \cdot,\cdot\rangle$ to denote the usual inner product on $L^2(\mathbb{R}^n)$. The first of these two equivalent identities is the classical Calder\'on reproducing formula. 

In \cite{DJ1984}, David and Journ\'e define $T$ to be weakly bounded if for every bounded subset $\BB \subseteq \mathcal{D}(\mathbb{R}^n)$, there exists $C>0$ such that 
$$
    |\langle Tf_{(a,b)}, g_{(a,b)}\rangle|\leq C
$$
for any $f,g \in \BB$ and any $(a,b) \in \mathbb{R}^{n+1}_+$. While this property is clearly necessary for $T$ to be bounded on $L^2(\mathbb{R}^n)$, it is also sufficient for the boundedness of the almost-diagonal term $S$. We use a similar \textit{weak compactness property} to obtain the compactness of $S$ in the proof of Theorem \ref{CZOCompactness}. 
\begin{defin}\label{weakcompdef}
Let $T$ be bounded on $L^2(\mathbb{R}^n)$. Then $T$ is weakly compact if for every bounded subset $\BB \subseteq \mathcal{D}(\mathbb{R}^n)$ and every $f\in \BB$, we have
$$
    \lim_{(a,b)\to \infty} \sup_{g\in \BB} |\langle Tf_{(a,b)}, g_{(a,b)}\rangle|=0.
$$

\end{defin}
\noindent Here and below we write $\lim_{(a,b)\rightarrow \infty}F(a,b)$ to denote $\lim_{R\rightarrow \infty}\sup_{(a,b) \in D((1,0),R)^c}F(a,b)$. Note that the weak compactness property is necessary for $T$ to be compact on $L^2(\mathbb{R}^n)$. Indeed, since $f_{(a,b)}\to 0$ weakly as $(a,b)\to \infty$, we have that $Tf_{(a,b)}\to 0$ strongly for compact $T$, so 
$$
    \sup_{g\in \BB}|\langle Tf_{(a,b)}, g_{(a,b)}\rangle|\leq \|Tf_{(a,b)}\|_{L^2(\mathbb{R}^n)}\sup_{g\in \BB}\|g\|_{L^2(\mathbb{R}^n)}\to 0
$$
as $(a,b)\to \infty$. 
 
While the almost-diagonalized term $S$ is dealt with using the weak boundedness condition in the proof of Theorem A, the paraproducts are handled with the $\text{BMO}(\mathbb{R}^n)$ hypotheses on the symbols $T1$ and $T^*1$ and the connection between $\text{BMO}(\mathbb{R}^n)$ and Carleson measures via the frame $\{\psi_{(a,b)}\}_{(a,b)\in\mathbb{R}^{n+1}_+}$. 
Define the Carleson tent over $B=B(x,r)\subseteq\mathbb{R}^n$ by
$$
    T(B):=\{(a,b) \in \mathbb{R}^{n+1}_+: |x-b|<r-a\}
$$ 
and define the cone over $x \in \mathbb{R}^n$ by
$$
    V_x:=\{(a,b)\in\mathbb{R}^{n+1}_+: |x-b|<a\}.
$$
For a Borel measure $\mu$ on $\mathbb{R}^{n+1}_+$, define
$$
    C\mu(x):=\sup_{(a,b)\in V_x}\frac{\mu(T(B(b,a)))}{|B(b,a)|}.
$$
Recall that a measure $\mu$ on $\mathbb{R}^{n+1}_+$ is a Carleson measure if $\|C\mu\|_{L^{\infty}(\mathbb{R}^n)}<\infty$. 
The well-known connection between $\text{BMO}(\mathbb{R}^n)$ and Carleson measures is given as follows: a locally integrable function
$f$ is in $\text{BMO}(\mathbb{R}^n)$ if and only if 
$$
    d\mu_f(a,b):=|\langle f,\psi_{(a,b)}\rangle|^2\,d\lambda(a,b)
$$ 
defines a Carleson measure on $\mathbb{R}^{n+1}_+$. 

To address the compactness of paraproducts, the role of $\text{BMO}(\R^n)$ is played by its subspace $\text{CMO}(\mathbb{R}^n)$.
Coifman and Weiss originally defined $\text{CMO}(\mathbb{R}^n)$ in \cite{CW1977}*{p. 638} as the closure of $\mathcal{D}(\mathbb{R}^n)$ with respect to the $\text{BMO}(\mathbb{R}^n)$ norm -- we remark that there are several equivalent formulations of $\text{CMO}(\mathbb{R}^n)$, including a connection with vanishing Carleson measures, see \cites{DM2016,N1977,U1978}.  
We will use the following one: A function $f$ is in $\text{CMO}(\mathbb{R}^n)$ if $f \in \text{BMO}(\mathbb{R}^n)$ and $\mu_f$ is a vanishing Carleson measure.
Recall that a Carleson measure $\mu$ on $\R^{n+1}_+$ is a vanishing Carleson measure if 
$$
    \lim_{(a,b)\rightarrow\infty} \frac{\mu(T(B(b,a)))}{|B(b,a)|}=0.
$$

Given a radial, non-increasing, bounded, and integrable function $\varphi$ on $\mathbb{R}^n$, define the non-tangential maximal function of $f$ by 
$$
	Mf(x):=\sup_{(a,b)\in V_x} |\langle f,\varphi_{(a,b)}\rangle|.
$$
Under the assumptions on $\varphi$, $M$ is pointwise controlled by the Hardy-Littlewood maximal function, and therefore is bounded on $L^p(\mathbb{R}^n)$ for all $p \in (1,\infty)$, see \cite{S1993}*{p. 57}. We use the following property of Carleson measures which follows from the arguments of \cite{S1993}*{p. 59--61}.
\begin{lemma}\label{SteinLemma}
    If $p>0$ and $\varphi \in L^1(\mathbb{R}^n)$ is radial, non-increasing, and bounded, then
    $$
        \int_{\mathbb{R}^{n+1}_+}|\langle f,\varphi_{(a,b)}\rangle|^p\,d\mu(a,b) \lesssim \int_{\mathbb{R}^n}Mf(x)^pC\mu(x)\,dx
    $$
    for all $f:\mathbb{R}^n\rightarrow \mathbb{R}$ and all Borel measures $\mu$ on $\mathbb{R}^{n+1}_+$ for which the right-hand side is finite.
\end{lemma}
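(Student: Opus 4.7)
The plan is to follow the standard tent-space proof of Carleson's embedding theorem, adapted so that the pointwise maximal function $C\mu$ rather than $\|C\mu\|_{L^\infty}$ appears on the right-hand side. Set $F(a,b):=|\langle f,\varphi_{(a,b)}\rangle|$. The key pointwise observation is that whenever $x\in B(b,a)$, equivalently $(a,b)\in V_x$, the definition of $M$ forces $F(a,b)\le Mf(x)$. Consequently, for every $\alpha>0$,
$$
\{(a,b)\in\mathbb{R}^{n+1}_+: F(a,b)>\alpha\}\subseteq T(\{Mf>\alpha\}),
$$
where I extend the Carleson tent notation to arbitrary open sets by $T(O):=\{(a,b)\in\mathbb{R}^{n+1}_+:B(b,a)\subseteq O\}$.

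Granted this inclusion, the layer-cake identity together with Fubini's theorem reduce the lemma to the tent estimate
$$
\mu(T(O))\lesssim \int_O C\mu(x)\,dx
$$
for every open $O\subseteq \mathbb{R}^n$ of finite measure. Indeed, inserting the above inclusion into the layer-cake formula $\int F^p\,d\mu = p\int_0^\infty \alpha^{p-1}\mu(\{F>\alpha\})\,d\alpha$ and then applying the tent estimate and swapping the order of integration yields $\int F^p\,d\mu \lesssim \int_{\mathbb{R}^n}Mf(x)^p C\mu(x)\,dx$, as desired.

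The building block of the tent estimate is its one-ball version: for $B=B(y,r)$ and any $x\in B$, the pair $(r,y)$ lies in $V_x$, so by the definition of $C\mu$ one has $\mu(T(B))/|B|\le C\mu(x)$; integrating over $x\in B$ gives $\mu(T(B))\le \int_B C\mu(x)\,dx$. To extend this from a single ball to a general open $O$, I apply a Whitney-type decomposition $\{B_j\}$ of $O$ into balls of bounded overlap with $r_j\approx \operatorname{dist}(B_j,O^c)$, chosen so that some fixed dilate $\gamma B_j$ still lies in $O$. The geometric core of the argument is the covering $T(O)\subseteq \bigcup_j T(\gamma B_j)$: any $(a,b)\in T(O)$ with $b\in B_j$ satisfies $a\le \operatorname{dist}(b,O^c)\lesssim r_j$, placing $(a,b)$ in $T(\gamma B_j)$ for $\gamma$ sufficiently large. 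Summing the one-ball estimate over $j$ and invoking the bounded overlap of the dilates $\{\gamma B_j\}$ yields the tent estimate.

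The only non-routine step is this Whitney covering of $T(O)$; everything else is standard layer-cake and Fubini manipulation. The main point to be careful about is to simultaneously arrange $\gamma$ so that $\{\gamma B_j\}$ both covers $T(O)$ and remains inside $O$, so that the right-hand side is genuinely $\int_O C\mu$ and not an integral over a larger set; both conditions are available from a sufficiently fine choice of Whitney decomposition.
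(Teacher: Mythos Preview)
The paper does not give its own proof of this lemma; it simply records that the estimate follows from Stein's argument on pp.~59--61 of his \emph{Harmonic Analysis}. Your layer-cake/tent-space outline with a Whitney covering is exactly that argument, adapted so that the pointwise function $C\mu$ replaces the Carleson norm $\|C\mu\|_{L^\infty}$.

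There is, however, an incorrect claim in your Whitney step. You assert that by taking the decomposition fine enough one can choose a single dilation factor $\gamma$ so that simultaneously $T(O)\subseteq\bigcup_j T(\gamma B_j)$ and $\gamma B_j\subseteq O$ for every $j$. These two requirements are incompatible. Write $B_j=B(c_j,r_j)$. The containment $\gamma B_j\subseteq O$ forces $\gamma r_j\le\operatorname{dist}(c_j,O^c)$, while the tent covering forces $\gamma r_j>|b-c_j|+a$ for every $(a,b)\in T(O)$ with $b\in B_j$; taking $a$ close to $\operatorname{dist}(b,O^c)$ makes $|b-c_j|+a$ arbitrarily close to $2r_j+\operatorname{dist}(c_j,O^c)$, so one needs $\gamma r_j\ge 2r_j+\operatorname{dist}(c_j,O^c)$. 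Combining gives $\operatorname{dist}(c_j,O^c)\ge \gamma r_j\ge 2r_j+\operatorname{dist}(c_j,O^c)$, a contradiction. Refining the Whitney decomposition rescales both sides together and does not help.

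The remedy is not to force $\gamma B_j\subseteq O$, but to sharpen the one-ball estimate so that the integral lands on $B_j$ rather than on $\gamma B_j$. For every $x\in B_j$ one has $|x-c_j|<r_j<\gamma r_j$, hence $(\gamma r_j,c_j)\in V_x$ and therefore $C\mu(x)\ge \mu(T(\gamma B_j))/|\gamma B_j|$. Averaging over $x\in B_j$ yields $\mu(T(\gamma B_j))\le \gamma^{\,n}\int_{B_j}C\mu$. Now sum over $j$ and use the bounded overlap of the \emph{original} Whitney balls $B_j\subseteq O$ to conclude $\mu(T(O))\lesssim \int_O C\mu$. With this adjustment your proof is complete and matches the argument the paper cites.
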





\section{Compactness characterization for localized operators}
\label{SectionLocalization}

We introduce a class of localized (or almost-diagonalized) operators and characterize the compact operators among the collection of all localized operators. We apply this abstraction to characterize the compact Calder\'on-Zygmund operators $T$ satisfying $T1=T^*1=0$.
\begin{defin}
An operator $T$ on $L^2(\mathbb{R}^n)$ is localized with respect to $\{\psi_{(a,b)}\}_{(a,b)\in\mathbb{R}^{n+1}_+}$ if 
\begin{align}\label{Schurcondition}
    &\sup_{(a',b')\in\mathbb{R}^{n+1}_+}w(a',b')^{-1}\int_{\mathbb{R}^{n+1}_+}|\langle T\psi_{(a',b')},\psi_{(a,b)}\rangle|w(a,b)\,d\lambda(a,b)<\infty,\\
    &\sup_{(a',b')\in\mathbb{R}^{n+1}_+}w(a',b')^{-1}\int_{\mathbb{R}^{n+1}_+}|\langle T^*\psi_{(a',b')},\psi_{(a,b)}\rangle|w(a,b)\,d\lambda(a,b)<\infty,\notag
\end{align}
and
\begin{align}\label{localizationcondition}
    &\lim_{R\rightarrow\infty}\sup_{(a',b')\in\mathbb{R}^{n+1}_+}w(a',b')^{-1}\int_{D((a',b'),R)^c}|\langle T\psi_{(a',b')},\psi_{(a,b)}\rangle|w(a,b)\,d\lambda(a,b)=0,\\
    &\lim_{R\rightarrow\infty}\sup_{(a',b')\in\mathbb{R}^{n+1}_+}w(a',b')^{-1}\int_{D((a',b'),R)^c}|\langle T^*\psi_{(a',b')},\psi_{(a,b)}\rangle|w(a,b)\,d\lambda(a,b)=0\notag
\end{align}
for some positive function $w$ on $\mathbb{R}^{n+1}_+$.
\end{defin}


The following is a version of the Riesz-Kolmogorov theorem in this setting, see \cites{DFG2002,MSWW2023}.
\begin{thm}\label{RieszKolmogorov}
A bounded set $\mathcal{K}\subseteq L^2(\mathbb{R}^n)$ is precompact if and only if 
$$
    \lim_{R\rightarrow\infty}\sup_{f\in\mathcal{K}}\int_{D((1,0),R)^c}|\langle f,\psi_{(a,b)}\rangle|^2\,d\lambda(a,b)=0.
$$
\end{thm}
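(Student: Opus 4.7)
The plan is to exploit that the Parseval frame identities from Section~\ref{preliminaries} show the wavelet coefficient map $W\colon L^2(\mathbb{R}^n)\to L^2(\mathbb{R}^{n+1}_+,d\lambda)$ defined by $Wf(a,b):=\langle f,\psi_{(a,b)}\rangle$ is an isometry satisfying $W^*W=I$, where $W^*c=\int_{\mathbb{R}^{n+1}_+}c(a,b)\psi_{(a,b)}\,d\lambda(a,b)$. For each $R>0$, introduce the truncation
$$
P_Rf:=\int_{D((1,0),R)}\langle f,\psi_{(a,b)}\rangle\psi_{(a,b)}\,d\lambda(a,b),
$$
which equals $W^*(\mathbf{1}_{D((1,0),R)}Wf)$. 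The isometry of $W$ together with $\|W^*\|\le 1$ then yield
$$
\|f-P_Rf\|_{L^2(\mathbb{R}^n)}^2\le\int_{D((1,0),R)^c}|\langle f,\psi_{(a,b)}\rangle|^2\,d\lambda(a,b),
$$
and $\int_{\mathbb{R}^{n+1}_+}|\langle f,\psi_{(a,b)}\rangle|^2\,d\lambda(a,b)=\|f\|_{L^2(\mathbb{R}^n)}^2$.

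For the forward direction, assume $\mathcal{K}$ is precompact. Given $\epsilon>0$, I would extract a finite $\epsilon$-net $\{f_1,\ldots,f_N\}\subseteq\mathcal{K}$ and pick $R$ large enough that $\int_{D((1,0),R)^c}|Wf_i|^2\,d\lambda<\epsilon^2$ for every $i$ (possible since each $Wf_i\in L^2(\mathbb{R}^{n+1}_+,d\lambda)$). For any $f\in\mathcal{K}$ with $\|f-f_i\|_{L^2(\mathbb{R}^n)}<\epsilon$, the triangle inequality in $L^2(\mathbb{R}^{n+1}_+,d\lambda)$ together with the isometry of $W$ gives $\bigl(\int_{D((1,0),R)^c}|Wf|^2\,d\lambda\bigr)^{1/2}<2\epsilon$, establishing the vanishing tail condition.

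For the reverse direction, the central claim is that $P_R$ is compact on $L^2(\mathbb{R}^n)$ for each $R>0$. Granting this, $P_R(\mathcal{K})$ is precompact since $\mathcal{K}$ is bounded, and the uniform approximation $\sup_{f\in\mathcal{K}}\|f-P_Rf\|_{L^2(\mathbb{R}^n)}\to 0$ (which follows from the hypothesis and the first display above) produces a finite $\epsilon$-net for $\mathcal{K}$ from one for $P_R(\mathcal{K})$ for $R$ large. To prove compactness of $P_R$, I factor $P_R=S_RW_R$, where $W_R\colon L^2(\mathbb{R}^n)\to L^2(D((1,0),R),d\lambda)$ is the bounded restriction of $W$ and $S_R\colon L^2(D((1,0),R),d\lambda)\to L^2(\mathbb{R}^n)$ is given by $S_Rc:=\int_{D((1,0),R)}c(a,b)\psi_{(a,b)}\,d\lambda(a,b)$. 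Then $S_R$ is an integral operator with kernel $\Psi(x,(a,b)):=\psi_{(a,b)}(x)$, whose squared Hilbert-Schmidt norm computes to
$$
\int_{\mathbb{R}^n}\int_{D((1,0),R)}|\psi_{(a,b)}(x)|^2\,d\lambda(a,b)\,dx=\|\psi\|_{L^2(\mathbb{R}^n)}^2\,\lambda(D((1,0),R)).
$$

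The main (though mild) obstacle is verifying $\lambda(D((1,0),R))<\infty$. This follows from the observation that $d\lambda=da\,db/a^{n+1}$ is precisely the Riemannian volume form associated with the left-invariant metric $ds^2=(da^2+db^2)/a^2$, so $\lambda$ coincides with hyperbolic volume on $\mathbb{R}^{n+1}_+$, and hyperbolic balls of finite radius have finite volume (a standard computation in the upper half-space model). Once this is confirmed, $S_R$ is Hilbert-Schmidt, hence compact, so $P_R=S_RW_R$ is compact, and the argument concludes.
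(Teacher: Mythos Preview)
Your argument is correct. The paper does not supply its own proof of Theorem~\ref{RieszKolmogorov}; it simply cites \cites{DFG2002,MSWW2023}, so there is no in-paper proof to compare against. Your route---using the isometry $W$ to transfer the problem to $L^2(\mathbb{R}^{n+1}_+,d\lambda)$, controlling $\|f-P_Rf\|$ by the tail integral via $\|W^*\|\le 1$, and establishing compactness of $P_R$ through the Hilbert--Schmidt factorization $P_R=S_RW_R$ with $\|S_R\|_{HS}^2=\|\psi\|_{L^2}^2\,\lambda(D((1,0),R))<\infty$---is clean and self-contained. The identification of $d\lambda$ with the Riemannian volume of the hyperbolic metric $ds^2=(da^2+db^2)/a^2$ is exactly right (the metric tensor is $a^{-2}\delta_{ij}$ in $n+1$ variables, giving $\sqrt{\det g}=a^{-(n+1)}$), so finiteness of $\lambda(D((1,0),R))$ is indeed the standard finiteness of hyperbolic ball volumes. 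Both directions then go through as you describe.
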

\noindent
Applying Theorem \ref{RieszKolmogorov} with $\mathcal{K}$ equal to the image of the closed unit ball in $L^2(\mathbb{R}^n)$ under a bounded operator $T$ yields the following corollary.
\begin{cor}\label{RKcorollary}
Let $T$ be a bounded operator on $L^2(\mathbb{R}^n)$. Then $T$ is compact on $L^2(\mathbb{R}^n)$ if and only if 
$$
    \lim_{R\rightarrow\infty}\sup_{\substack{f\in L^2(\mathbb{R}^n)\\ \|f\|_{L^2(\mathbb{R}^n)}\leq1}}\int_{D((1,0),R)^c}|\langle Tf,\psi_{(a,b)}\rangle|^2\,d\lambda(a,b)=0.
$$
\end{cor}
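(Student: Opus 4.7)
The plan is to invoke Theorem \ref{RieszKolmogorov} with $\mathcal{K}$ chosen to be the image of the closed $L^2$-unit ball under $T$. A bounded operator on a Hilbert space is compact precisely when the image of the unit ball is precompact, so setting
$$
    \mathcal{K}:=\{Tf : f \in L^2(\mathbb{R}^n),\ \|f\|_{L^2(\mathbb{R}^n)}\leq 1\},
$$
which is bounded in $L^2(\mathbb{R}^n)$ because $T$ is bounded, reduces the compactness of $T$ to the precompactness of $\mathcal{K}$. Applying Theorem \ref{RieszKolmogorov} to this $\mathcal{K}$, one obtains that $\mathcal{K}$ is precompact if and only if
$$
    \lim_{R\rightarrow\infty}\sup_{g\in\mathcal{K}}\int_{D((1,0),R)^c}|\langle g,\psi_{(a,b)}\rangle|^2\,d\lambda(a,b)=0.
$$
Writing $g=Tf$ and identifying $\sup_{g\in\mathcal{K}}$ with $\sup_{\|f\|_{L^2(\mathbb{R}^n)}\leq 1}$ produces exactly the condition in the statement.

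In short, the corollary is a direct consequence of the Riesz-Kolmogorov-type Theorem \ref{RieszKolmogorov} combined with the standard characterization of compact operators as those sending the unit ball to a precompact set. No real obstacle arises: Theorem \ref{RieszKolmogorov} is already phrased as a uniform-in-$\mathcal{K}$ condition on the wavelet coefficients, so no further manipulation of suprema or compactness criteria is needed.
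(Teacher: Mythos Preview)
Your proposal is correct and matches the paper's own argument exactly: the paper states that the corollary follows by applying Theorem \ref{RieszKolmogorov} with $\mathcal{K}$ equal to the image of the closed unit ball under $T$, which is precisely what you do.
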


\begin{prop}\label{compactnessproposition}
Let $T$ be a bounded operator on $L^2(\mathbb{R}^n)$ that satisfies \eqref{Schurcondition} for some positive function $w$ on $\mathbb{R}^{n+1}_+$. If 
\begin{align}\label{compactnessclaim}
    \lim_{R\rightarrow\infty}\sup_{(a',b')\in\mathbb{R}^{n+1}_+}w(a',b')^{-1}\int_{D((1,0),R)^c}|\langle T\psi_{(a',b')},\psi_{(a,b)}\rangle|w(a,b)\,d\lambda(a,b)=0,
\end{align}
then $T$ is compact on $L^2(\mathbb{R}^n)$.
\end{prop}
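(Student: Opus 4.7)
The plan is to verify the compactness criterion provided by Corollary \ref{RKcorollary}, that is, to show
$$
    \lim_{R\rightarrow\infty}\sup_{\|f\|_{L^2(\mathbb{R}^n)}\le 1}\int_{D((1,0),R)^c}|\langle Tf,\psi_{(a,b)}\rangle|^2\,d\lambda(a,b)=0.
$$
The argument is a weighted Schur-test style estimate in which the weight $w$ is used to transfer the smallness in \eqref{compactnessclaim} into an $L^2$-type bound.

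First I would apply the Calder\'on reproducing formula to expand
$$
    \langle Tf,\psi_{(a,b)}\rangle = \int_{\mathbb{R}^{n+1}_+}\langle f,\psi_{(a',b')}\rangle\,\langle T\psi_{(a',b')},\psi_{(a,b)}\rangle\,d\lambda(a',b').
$$
Then a weighted Cauchy-Schwarz inequality (splitting off a factor of $w(a',b')^{1/2}$ from $|\langle T\psi_{(a',b')},\psi_{(a,b)}\rangle|^{1/2}$ and its reciprocal from the other factor) gives
$$
    |\langle Tf,\psi_{(a,b)}\rangle|^2\le I_1(a,b)\cdot I_2(a,b),
$$
where
$$
    I_1(a,b):=\int \frac{|\langle f,\psi_{(a',b')}\rangle|^2}{w(a',b')}|\langle T\psi_{(a',b')},\psi_{(a,b)}\rangle|\,d\lambda(a',b'),
$$
$$
    I_2(a,b):=\int w(a',b')|\langle T\psi_{(a',b')},\psi_{(a,b)}\rangle|\,d\lambda(a',b').
$$
Using $|\langle T\psi_{(a',b')},\psi_{(a,b)}\rangle|=|\langle T^*\psi_{(a,b)},\psi_{(a',b')}\rangle|$, the second bound in \eqref{Schurcondition} applied to $T^*$ (after relabeling variables) yields $I_2(a,b)\le C\,w(a,b)$ for a constant $C$ independent of $(a,b)$.

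Next I would integrate over $D((1,0),R)^c$ and interchange the order of integration by Fubini-Tonelli:
$$
    \int_{D((1,0),R)^c}|\langle Tf,\psi_{(a,b)}\rangle|^2\,d\lambda(a,b)\le C\int \frac{|\langle f,\psi_{(a',b')}\rangle|^2}{w(a',b')}J_R(a',b')\,d\lambda(a',b'),
$$
where
$$
    J_R(a',b'):=\int_{D((1,0),R)^c}|\langle T\psi_{(a',b')},\psi_{(a,b)}\rangle|\,w(a,b)\,d\lambda(a,b).
$$
Hypothesis \eqref{compactnessclaim} says exactly that $J_R(a',b')\le \varepsilon(R)\,w(a',b')$ uniformly in $(a',b')$, with $\varepsilon(R)\to 0$ as $R\to\infty$. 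Substituting and using Parseval's identity for the continuous frame $\{\psi_{(a,b)}\}$ gives
$$
    \int_{D((1,0),R)^c}|\langle Tf,\psi_{(a,b)}\rangle|^2\,d\lambda(a,b)\le C\varepsilon(R)\|f\|_{L^2(\mathbb{R}^n)}^2,
$$
from which the conclusion follows by Corollary \ref{RKcorollary}.

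The main conceptual obstacle is that \eqref{compactnessclaim} only provides smallness at the fixed base point $(1,0)$ rather than uniformly about each $(a',b')$, so a direct Schur argument would not produce the needed decay. The weighted Cauchy-Schwarz split is chosen precisely so that $w(a',b')$ appears as a denominator on the ``input'' side and is then canceled by the $w(a',b')$ produced by \eqref{compactnessclaim} on the ``output'' side; this is the only delicate point in the proof, and the remaining steps are routine applications of Fubini-Tonelli and Parseval.
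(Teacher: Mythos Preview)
Your proof is correct and follows essentially the same route as the paper's: both verify the criterion of Corollary \ref{RKcorollary} via a weighted Schur-type argument, expanding $\langle Tf,\psi_{(a,b)}\rangle$ through the reproducing formula, applying Cauchy--Schwarz with the weight $w$, bounding one factor by the $T^*$ half of \eqref{Schurcondition}, and then using Fubini together with \eqref{compactnessclaim} and Parseval to conclude. The only cosmetic difference is that the paper inserts the factor $w(a,b)^{\pm 1/2}$ directly into the Cauchy--Schwarz split so that the ``column'' integral is bounded by a constant, whereas you bound it by $C\,w(a,b)$ and carry that $w(a,b)$ into the next step; the arithmetic is identical.
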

\begin{proof}
We verify the condition of Corollary \ref{RKcorollary}. Let $f \in L^2(\mathbb{R}^n)$ with $\|f\|_{L^2(\mathbb{R}^n)}\leq 1$. Then
\begin{align*}
    \langle Tf, \psi_{(a,b)}\rangle &=\left\langle T\left(\int_{\mathbb{R}^{n+1}_+}\langle f,\psi_{(a',b')}\rangle \psi_{(a',b')}\,d\lambda(a',b')\right), \psi_{(a,b)}\right\rangle\\
    &= \int_{\mathbb{R}^{n+1}_+}\langle f,\psi_{(a',b')}\rangle\langle T\psi_{(a',b')},\psi_{(a,b)}\rangle\,d\lambda(a',b').
\end{align*}
By Cauchy-Schwarz and the localization condition \eqref{Schurcondition}, we have
\begin{align*}
    |\langle Tf,\psi_{(a,b)}\rangle|^2&\leq \left(\int_{\mathbb{R}^{n+1}_+}|\langle f,\psi_{(a',b')}\rangle||\langle T\psi_{(a',b')},\psi_{(a,b)}\rangle|\,d\lambda(a',b')\right)^2\\
    &\leq \left(\int_{\mathbb{R}^{n+1}_+} |\langle T\psi_{(a',b')},\psi_{(a,b)}\rangle|\frac{w(a',b')}{w(a,b)}\,d\lambda(a',b')\right)\\
    &\quad\quad\times\left(\int_{\mathbb{R}^{n+1}_+}|\langle T\psi_{(a',b')},\psi_{(a,b)}\rangle||\langle f,\psi_{(a',b')}\rangle|^2\frac{w(a,b)}{w(a',b')}\,d\lambda(a',b')\right)\\
    &\leq C\int_{\mathbb{R}^{n+1}_+}|\langle T\psi_{(a',b')},\psi_{(a,b)}\rangle||\langle f,\psi_{(a',b')}\rangle|^2\frac{w(a,b)}{w(a',b')}\,d\lambda(a',b'),
\end{align*}
where $C$ is the finite constant from \eqref{Schurcondition}. Therefore, for any $R>0$, we have 
\begin{align*}
    \int_{D((1,0),R)^c} &|\langle Tf,\psi_{(a,b)}\rangle|^2\,d\lambda(a,b) \\
    &\leq C\int_{D((1,0),R)^c}\int_{\mathbb{R}^{n+1}_+}|\langle T\psi_{(a',b')},\psi_{(a,b)}\rangle||\langle f,\psi_{(a',b')}\rangle|^2\frac{w(a,b)}{w(a',b')}\,d\lambda(a',b')d\lambda(a,b)\\
    &= C \int_{\mathbb{R}^{n+1}_+}|\langle f,\psi_{(a',b')}\rangle|^2\int_{D((1,0),R)^c} |\langle T\psi_{(a',b')},\psi_{(a,b)}\rangle|\frac{w(a,b)}{w(a',b')}\,d\lambda(a,b)d\lambda(a',b').
\end{align*}

Let $\varepsilon>0$ be given. Apply the hypothesis \eqref{compactnessclaim} to get a constant $R>0$ such that $\int_{D((1,0),R)^c} |\langle T\psi_{(a',b')},\psi_{(a,b)}\rangle|\frac{w(a,b)}{w(a',b')}\,d\lambda(a,b)< \frac{\varepsilon}{C}$ for all $(a',b')\in\mathbb{R}^{n+1}_+$. Then 
$$
    \int_{D((1,0),R)^c} |\langle Tf,\psi_{(a,b)}\rangle|^2\,d\lambda(a,b) < C\frac{\varepsilon}{C}\int_{\mathbb{R}^{n+1}_+}|\langle f,\psi_{(a',b')}\rangle|^2\,d\lambda(a',b') \leq \varepsilon.
$$
Therefore $T$ is compact by Corollary \ref{RKcorollary}.
\end{proof}

\begin{thm}\label{Generalcompactness}
Let $T$ be bounded on $L^2(\mathbb{R}^n)$ and localized with respect to $\{\psi_{(a,b)}\}_{(a,b)\in\mathbb{R}^{n+1}_+}$. Then $T$ is compact on $L^2(\mathbb{R}^n)$ if and only if $T$ is weakly compact.
\end{thm}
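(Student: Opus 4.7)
The necessity direction ($T$ compact implies $T$ weakly compact) is essentially already observed in the discussion following Definition~\ref{weakcompdef}: for $f$ in a bounded subset $\mathcal{B}\subseteq\mathcal{D}(\mathbb{R}^n)$, the family $f_{(a,b)}$ has fixed $L^2$-norm and tends weakly to $0$ in $L^2(\mathbb{R}^n)$ as $(a,b)\to\infty$, so $\|Tf_{(a,b)}\|_{L^2(\mathbb{R}^n)}\to 0$ by compactness, and Cauchy--Schwarz absorbs the supremum over $g\in\mathcal{B}$.

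For the sufficiency direction, the plan is to verify the hypothesis \eqref{compactnessclaim} of Proposition~\ref{compactnessproposition}. Given $\varepsilon>0$, first choose $R_1$ using the localization condition \eqref{localizationcondition} so that $w(a',b')^{-1}\int_{D((a',b'),R_1)^c}|\langle T\psi_{(a',b')},\psi_{(a,b)}\rangle|w(a,b)\,d\lambda(a,b)<\varepsilon/2$ uniformly in $(a',b')$. Then I would split the supremum over $(a',b')$ into the regimes $d((a',b'),(1,0))<R/2$ and $d((a',b'),(1,0))\geq R/2$. In the first regime the left-invariance of $d$ and the triangle inequality give $D((1,0),R)^c\subseteq D((a',b'),R/2)^c$, so once $R\geq 2R_1$ the localization bound alone suffices. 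In the second regime I would decompose $D((1,0),R)^c=(D((1,0),R)^c\cap D((a',b'),R_1)^c)\cup(D((1,0),R)^c\cap D((a',b'),R_1))$, control the first piece by the localization estimate already in hand, and attack the near-diagonal second piece using weak compactness.

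Weak compactness is activated as follows. For $(a,b)\in D((a',b'),R_1)$, write $(a,b)=(a',b')(c,d)$ with $(c,d)\in D((1,0),R_1)$, so that
$$
    \langle T\psi_{(a',b')},\psi_{(a,b)}\rangle = \langle T(\psi)_{(a',b')},(\psi_{(c,d)})_{(a',b')}\rangle.
$$
Because $D((1,0),R_1)$ is contained in a compact subset of $\mathbb{R}^{n+1}_+$ bounded away from $a=0$, the supports and derivatives of $\{\psi_{(c,d)}\}_{(c,d)\in D((1,0),R_1)}$ are uniformly controlled, and so $\mathcal{B}_{R_1}:=\{\psi\}\cup\{\psi_{(c,d)}:(c,d)\in D((1,0),R_1)\}$ is a bounded subset of $\mathcal{D}(\mathbb{R}^n)$. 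Weak compactness with $f=\psi$ and $\mathcal{B}=\mathcal{B}_{R_1}$ then delivers, for any $\eta>0$, an $R_2$ so that $|\langle T\psi_{(a',b')},\psi_{(a,b)}\rangle|<\eta$ whenever $d((a',b'),(1,0))>R_2$ and $(a,b)\in D((a',b'),R_1)$.

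The main obstacle I anticipate is converting this pointwise bound into a uniform integral estimate. The naive Chebyshev-type bound for the near-diagonal piece is $\eta\cdot w(a',b')^{-1}\int_{D((a',b'),R_1)}w(a,b)\,d\lambda(a,b)$, and the right-hand factor must be controlled uniformly in $(a',b')$. By left-invariance of $\lambda$, this amounts to the quasi-left-invariance property $\sup_{(a',b')}\int_{D((1,0),R_1)}w((a',b')(c,d))/w(a',b')\,d\lambda(c,d)<\infty$, which is not formally built into the definition of a localized operator but which I would verify directly in concrete applications (in particular for the weights to be used in Section~\ref{T1Section}, where the explicit group law $(a',b')(c,d)=(a'c,a'd+b')$ should make the bound transparent). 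With that estimate in hand, pick $\eta$ small enough that $\eta$ times the uniform constant is less than $\varepsilon/2$, then take $R$ large enough that $R/2>R_2$ and $R\geq 2R_1$, so \eqref{compactnessclaim} follows and Proposition~\ref{compactnessproposition} delivers compactness.
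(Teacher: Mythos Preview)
Your proposal is correct and follows essentially the same route as the paper: verify \eqref{compactnessclaim} by splitting $D((1,0),R)^c$ into a far-diagonal piece $D((a',b'),R_1)^c$ controlled by \eqref{localizationcondition} and a near-diagonal piece $D((a',b'),R_1)$ controlled by weak compactness applied with the bounded set $\{\psi\}\cup\{\psi_{(c,d)}:(c,d)\in D((1,0),R_1)\}$; your additional case split on whether $(a',b')$ lies near $(1,0)$ is harmless but unnecessary, since the paper simply observes via the triangle inequality that the near-diagonal piece is empty unless $(a',b')$ is already far from $(1,0)$. The obstacle you flag---uniform control of $w(a',b')^{-1}\int_{D((a',b'),R_1)}w(a,b)\,d\lambda(a,b)$---is precisely what the paper invokes without justification when it writes ``Let $C>0$ be a constant such that $\frac{w(a,b)}{w(a',b')}\leq C$ for all $(a,b)\in D((a',b'),R_0)$''; this uniformity in $(a',b')$ is not a formal consequence of the localization axioms, but it holds for the weight $w(a,b)=a^{n/2}$ used in Section~\ref{T1Section} since $(a,b)\in D((a',b'),R_1)$ forces $a/a'$ into a fixed compact interval of $(0,\infty)$.
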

\begin{proof}
The weak compactness of $T$ is necessary by the argument following Definition \ref{weakcompdef}.
We prove the reverse implication by verifying condition \eqref{compactnessclaim} and applying Proposition \ref{compactnessproposition}. Fix $(a',b')\in\mathbb{R}^{n+1}_+$ and let $\varepsilon>0$. Apply \eqref{localizationcondition} to get $R_0>0$ such that
$$
w(a',b')^{-1}\int_{D((a',b'),R_0)^c}|\langle T\psi_{(a',b')},\psi_{(a,b)}\rangle|w(a,b)\,d\lambda(a,b)<\frac{\varepsilon}{2}
$$
for all $(a',b')\in\mathbb{R}^{n+1}_+$. Let $C>0$ be a constant 
such that $\frac{w(a,b)}{w(a',b')}\leq C$ for all $(a,b) \in D((a',b'),R_0)$. 
Use the weak compactness to select $N_0>0$ so that $|\langle T\psi_{(a',b')},\psi_{(a,b)}\rangle| < \frac{\varepsilon}{2C\lambda(D((1,0),R_0))}$ whenever $(a',b') \in D((1,0),N_0)^c$. 
Set $R=R_0+N_0$. 

For arbitrary $(a',b') \in \mathbb{R}^{n+1}_+$, write
\begin{align*}
    w(a',b')^{-1}&\int_{D((1,0),R)^c} |\langle T\psi_{(a',b')},\psi_{(a,b)}\rangle|w(a,b)\,d\lambda(a,b)\\
    &= w(a',b')^{-1}\int_{D((1,0),R)^c\cap D((a',b'),R_0)^c} |\langle T\psi_{(a',b')},\psi_{(a,b)}\rangle|w(a,b)\,d\lambda(a,b)\\
    &\quad\quad+w(a',b')^{-1}\int_{D((1,0),R)^c\cap D((a',b'),R_0)} |\langle T\psi_{(a',b')},\psi_{(a,b)}\rangle|w(a,b)\,d\lambda(a,b).
\end{align*}
The first term above is controlled above by $\varepsilon/2$ due to the choice of $R_0$. 
For the second term, we only need to consider the case when $D((1,0),R)^c \cap D((a',b'),R_0) \neq \emptyset$. If $(a,b) \in D((1,0),R)^c \cap D((a',b'),R_0)$, then 
$$
    d((a',b'),(1,0))\ge d((a,b),(1,0))-d((a',b'),(a,b))\ge R- R_0 = N_0,
$$
so $(a',b') \in D((1,0),N_0)^c$. We thus control the second term above by choice of $N_0$:
\begin{align*}
    w(a',b')^{-1}&\int_{D((1,0),R)^c \cap D((a',b'),R_0)} |\langle T\psi_{(a',b')},\psi_{(a,b)}\rangle|w(a,b)\,d\lambda(a,b)\\
    &< C\lambda(D((a',b'),R_0))\frac{\varepsilon}{2C\lambda(D((1,0),R_0))}=\frac{\varepsilon}{2}.
\end{align*}
This establishes \eqref{compactnessclaim} and completes the proof.
\end{proof}


\section{$T1$ theorem for compactness of Calder\'on-Zygmund integrals}\label{T1Section}

We show that singular integral operators $T$ associated to Calder\'on-Zygmund kernels that satisfy $T1=T^*1=0$ are localized with respect to $\{\psi_{(a,b)}\}_{(a,b)\in\mathbb{R}^{n+1}_+}$.

The following standard estimate is crucial. 
\begin{lemma}\label{matrixcoefficients}
Let $T$ be a singular integral operator associated to a Calder\'on-Zygmund kernel. If $T$ is weakly bounded and $T1=T^*1=0$, then 
$$
    |\langle T\psi,\psi_{(a,b)}\rangle|\lesssim
    \begin{cases}
    \frac{1}{a^{n/2+\delta}}&a\ge 1, \,\, |b|\leq a\\
    \frac{a^{n/2}}{|b|^{n+\delta}} & a\ge 1,\,\,|b|> a\\
    a^{n/2+\delta} & a< 1, \,\, |b|\leq 1\\
    \frac{a^{n/2+\delta}}{|b|^{n+\delta}} & a< 1, \,\, |b|> 1
    \end{cases}.
$$
\end{lemma}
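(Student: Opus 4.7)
The plan is to prove the estimate by splitting the parameter space into a ``well-separated'' regime ($|b|$ large relative to $\max(a,1)$) and an ``overlapping'' regime. In the separated regime, kernel H\"older smoothness together with one cancellation suffices. In the overlapping regime, the proof relies on the hypotheses $T1 = T^*1 = 0$ combined with weak boundedness of $T$.

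For the separated regime, I would insert a cancellation coming from the mean-zero property of the wavelet with the smaller scale. If $a \ge 1$ (part of case 2), using $\int\psi = 0$ together with $|K(x,y) - K(x,0)| \lesssim |y|^\delta/|x-y|^{n+\delta}$ (valid once $|x| \gtrsim |b|$) and $\|\psi_{(a,b)}\|_{L^1} \lesssim a^{n/2}$ yields $a^{n/2}/|b|^{n+\delta}$. Dually, if $a < 1$ (part of case 4), I would use $\int\psi_{(a,b)} = 0$ and the kernel smoothness in the first variable; the cancellation provides the extra factor of $a^\delta$ and hence $a^{n/2+\delta}/|b|^{n+\delta}$.

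For the overlapping regime, I would treat case 1 ($a \ge 1$, $|b|\le a$) using $T^*1 = 0$: combined with $\int\psi = 0$, this gives $\int T\psi = 0$, so
\[
\langle T\psi, \psi_{(a,b)}\rangle = \langle T\psi, \psi_{(a,b)} - \psi_{(a,b)}(0)\rangle.
\]
Fix a cutoff $\eta \in \mathcal{D}(\mathbb{R}^n)$ equal to $1$ on $B(0,2)$ and supported in $B(0,3)$, and split by $\psi_{(a,b)} - \psi_{(a,b)}(0) = (\psi_{(a,b)} - \psi_{(a,b)}(0))\eta + (\psi_{(a,b)} - \psi_{(a,b)}(0))(1-\eta)$. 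The near piece is supported in $B(0,3)$ with all $\mathcal{D}$-seminorms uniformly controlled by $a^{-n/2-\delta}$ (using the $\delta$-H\"older continuity of $\psi$ on the scale of $a$), so a normalized version lies in a bounded subset of $\mathcal{D}$, and weak boundedness applied at $(1,0)$ yields the bound $\lesssim a^{-n/2-\delta}$. The far piece, localized to $B(0,2)^c$, is handled by applying $\int\psi = 0$ and kernel smoothness in the second variable, reducing the estimate to integrating $|\psi_{(a,b)}(x) - \psi_{(a,b)}(0)|/|x|^{n+\delta}$ over $|x| \ge 2$; splitting at $|x| = a$ and using the Lipschitz bound $Ca^{-n/2-1}|x|$ on the inside together with the trivial $L^\infty$ bound $Ca^{-n/2}$ on the outside (recalling $\psi_{(a,b)}$ vanishes outside $B(0, 2a)$) cleanly produces $Ca^{-n/2-\delta}$. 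Case 3 is dual: $T1 = 0$ and $\int\psi_{(a,b)} = 0$ give $\int T^*\psi_{(a,b)} = 0$, hence $\langle T\psi, \psi_{(a,b)}\rangle = \langle \psi - \psi(b), T^*\psi_{(a,b)}\rangle$, and one splits the $x$-integral at $B(b, 2a)$, handling the near part by weak boundedness applied to an appropriately rescaled bump supported in $B(b,3a)$, and the far part by the pointwise estimate $|T^*\psi_{(a,b)}(x)| \lesssim a^{n/2+\delta}/|x-b|^{n+\delta}$ obtained from $\int\psi_{(a,b)} = 0$ and kernel smoothness.

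The main obstacle is achieving the sharp $\delta$-power in the overlapping cases (1 and 3). A direct appeal to weak boundedness gives only the trivial bound of order $1$, and careless use of H\"older in the intermediate annulus (either $2 \le |x| \le 2a$ in case 1 or $2a \le |x-b| \le 2$ in case 3) produces a spurious $\log(a)$ factor. The remedy is to track simultaneously the $L^\infty$ and the Lipschitz estimates of the mean-zero corrections $\psi_{(a,b)}(x) - \psi_{(a,b)}(0)$ and $\psi(x) - \psi(b)$, and to use whichever is tighter on each dyadic piece; this produces the additional factor $a^{\pm\delta}$ and kills the logarithmic divergence (assuming, without loss of generality, that $\delta < 1$).
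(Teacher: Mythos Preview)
Your proof outline is correct and follows the standard route: kernel smoothness plus one cancellation in the separated regime, and the $T1=T^*1=0$ hypotheses combined with weak boundedness (after subtracting a constant and splitting near/far) in the overlapping regime. The paper does not actually prove this lemma---it records it as a known estimate and points to \cite{CoifmanMeyer}*{p.~52--53} and \cite{DT2015}*{Theorem 4.4}---and your argument is essentially the one carried out in those references, so there is nothing to compare beyond noting that your sketch is a faithful expansion of what the paper cites.

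Two small remarks. First, in your far-piece estimate for case~1 the parenthetical ``recalling $\psi_{(a,b)}$ vanishes outside $B(0,2a)$'' is slightly misleading, since the difference $\psi_{(a,b)}-\psi_{(a,b)}(0)$ does \emph{not} vanish there; what you actually use (and what makes the integral over $\{|x|\ge a\}$ converge) is that $|\psi_{(a,b)}(x)-\psi_{(a,b)}(0)|\lesssim a^{-n/2}$ globally together with the integrable decay $|T\psi(x)|\lesssim |x|^{-n-\delta}$. Second, your reduction to $\delta<1$ is harmless for the application in the paper (Theorem~\ref{CZOlocalization} only needs some positive exponent), but strictly speaking it proves the displayed estimate with $\min(\delta,1)$ in place of~$\delta$; this is worth stating explicitly.
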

\begin{rem} For a high level proof of this estimate see~\cite{DT2015}*{Theorem 4.4}. More detailed proof can be found in~\cite{CoifmanMeyer}*{p. 52-53}. It is important to remark that for this estimate to hold, the mother wavelet $\psi$ needs to have at least one bounded derivative. In particular, the standard Haar wavelet cannot be used in our argument. 
\end{rem}


\begin{thm}\label{CZOlocalization}
Let $T$ be a singular integral operator associated to a Calder\'on-Zygmund kernel. If $T$ is weakly bounded and $T1=T^*1=0$, then $T$ is localized with respect to $\{\psi_{(a,b)}\}_{(a,b)\in\mathbb{R}^{n+1}_+}$.
\end{thm}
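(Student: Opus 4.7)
The plan is to leverage the $ax+b$ group symmetry to reduce the estimate to a single integral and then exploit Lemma~\ref{matrixcoefficients} with a carefully chosen weight. The key identity, following from unitarity of $U$ and the group law, is
\[
    \langle T\psi_{(a',b')},\psi_{(a,b)}\rangle=\langle T_{(a',b')}\psi,\psi_{(\tilde a,\tilde b)}\rangle, \qquad (\tilde a,\tilde b):=(a',b')^{-1}*(a,b),
\]
where $T_{(a',b')}=U(a',b')^*TU(a',b')$. Since $U(a',b')$ sends constants to constants, the hypotheses $T1=T^*1=0$ pass to $T_{(a',b')}$, and a direct computation shows that $T_{(a',b')}$ is weakly bounded and associated with a Calder\'on-Zygmund kernel with the same constants as $T$. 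Thus Lemma~\ref{matrixcoefficients} applies to $T_{(a',b')}$ uniformly in $(a',b')$, yielding
\[
    |\langle T\psi_{(a',b')},\psi_{(a,b)}\rangle|\lesssim F(\tilde a,\tilde b),
\]
where $F(\tilde a,\tilde b)$ denotes the piecewise envelope from the lemma.

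The next step is to select the weight. Setting $w(a,b):=a^{n/2}$ and using $a=\tilde a\, a'$, one finds $w(a,b)/w(a',b')=\tilde a^{n/2}$, so the weight factors cleanly through the dilation. Since both the left-Haar measure $\lambda$ and the hyperbolic distance $d$ are left-invariant, the change of variables $(a,b)\mapsto(\tilde a,\tilde b)$ preserves $\lambda$ and sends $D((a',b'),R)^c$ onto $D((1,0),R)^c$. This reduces both \eqref{Schurcondition} and \eqref{localizationcondition} to showing that
\[
    I(R):=\int_{D((1,0),R)^c}F(\tilde a,\tilde b)\,\tilde a^{n/2}\,d\lambda(\tilde a,\tilde b)
\]
is finite for $R=0$ and tends to $0$ as $R\to\infty$.

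The verification is then a direct calculation, split into the four regions of Lemma~\ref{matrixcoefficients}. On $\{\tilde a<1,|\tilde b|\le 1\}$ one obtains $\int_0^1\tilde a^{n/2+\delta}\cdot\tilde a^{n/2}\cdot\tilde a^{-n-1}\,d\tilde a=\int_0^1\tilde a^{\delta-1}\,d\tilde a<\infty$; on the large-scale region $\{\tilde a\ge 1,|\tilde b|\le\tilde a\}$ one gets $\int_1^\infty\tilde a^{-\delta-1}\,d\tilde a<\infty$; the two $|\tilde b|$-large regions reduce to the previous two after integrating first in $\tilde b$ (which yields either a bounded factor or a factor $\tilde a^{-\delta}$). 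Hence $I(0)<\infty$, and $I(R)\to 0$ as $R\to\infty$ by dominated convergence. Applying the same argument to $T^*$, which shares all the relevant hypotheses, completes the proof.

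The main obstacle lies in the choice of the weight: the naive choice $w\equiv 1$ fails because the small-scale region $\tilde a<1$ produces $\int_0^1\tilde a^{\delta-n/2-1}\,d\tilde a$, which diverges unless $\delta>n/2$. The heavy density $a^{-n-1}$ of the left-Haar measure near $a=0$ must be offset, and the choice $w(a,b)=a^{n/2}$ (in fact any $w(a,b)=a^\alpha$ with $\alpha\in(n/2-\delta,n/2+\delta)$) achieves this balance while remaining compatible with the dilation structure.
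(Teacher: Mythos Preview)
Your proposal is correct and follows essentially the same approach as the paper: the weight $w(a,b)=a^{n/2}$, the reduction via the left-translation $(a,b)\mapsto(a',b')^{-1}*(a,b)$ and Lemma~\ref{matrixcoefficients} applied to $T_{(a',b')}$, the four-region split, and the tail argument all coincide with the paper's proof. Your closing remark on the admissible range $\alpha\in(n/2-\delta,\,n/2+\delta)$ for the weight exponent is a useful observation not made explicit in the paper.
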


\begin{proof}
Let $w$ be given by $w(a,b)=a^{n/2}$. Using the substitution $\left(\frac{a}{a'},\frac{b-b'}{a'}\right) \mapsto (a,b)$ we obtain
\begin{align*}
    \sup_{(a',b')\in\mathbb{R}^{n+1}_+}&w(a',b')^{-1}\int_{\mathbb{R}^{n+1}_+}|\langle T\psi_{(a',b')},\psi_{(a,b)}\rangle|w(a,b)\,d\lambda(a,b)\\
    &=\sup_{(a',b')\in\mathbb{R}^{n+1}_+}w(a',b')^{-1}\int_{\mathbb{R}^{n+1}_+}\left|\left\langle T_{(a',b')}\psi,U(a',b')^*\psi_{(a,b)}\right\rangle\right|w(a,b)\,d\lambda(a,b)\\
    &=\sup_{(a',b')\in\mathbb{R}^{n+1}_+}\int_{\mathbb{R}^{n+1}_+}\left|\left\langle T_{(a',b')}\psi,\psi_{\left(\frac{a}{a'},\frac{b-b'}{a'}\right)}\right\rangle\right|w\left(\frac{a}{a'},\frac{b-b'}{a'}\right)\,d\lambda(a,b)\\
    &=\sup_{(a',b')\in\mathbb{R}^{n+1}_+}\int_{\mathbb{R}^{n+1}_+}|\langle T_{(a',b')}\psi,\psi_{(a,b)}\rangle|w(a,b)\,d\lambda(a,b).
\end{align*}
Recall that $T_{(a',b')}$ is a singular integral operator associated to a Calder\'on-Zygmund kernel satisfying \eqref{kernelsize} and \eqref{kernelsmoothness} with the same constants as $T$. Also, note that we have $T_{(a',b')}1=T_{(a',b')}^*1=0$ since $T1=T^*1=0$. Splitting $\mathbb{R}^{n+1}_+$ into four regions and applying the bounds of Lemma \ref{matrixcoefficients}, we obtain
\begin{align*}
    &\sup_{(a',b')\in\mathbb{R}^{n+1}_+}\int_{\mathbb{R}^{n+1}_+}|\langle T_{(a',b')}\psi,\psi_{(a,b)}\rangle|w(a,b)\,d\lambda(a,b)\\
    &\quad\quad\lesssim \int_{1}^{\infty}\int_{|b|\leq a}\frac{1}{a^{n+1+\delta}}\,dbda+\int_{1}^{\infty}\int_{|b|>a}\frac{1}{a|b|^{n+\delta}}\,dbda\\
    &\quad\quad\quad+\int_0^1\int_{|b|\leq 1} \frac{1}{a^{1-\delta}}\,dbda+\int_{0}^1\int_{|b|>1}\frac{1}{a^{1-\delta}|b|^{n+\delta}}\,dbda \,\,\lesssim 1.
\end{align*} 
This establishes the first localization condition of \eqref{Schurcondition}. The second condition of \eqref{Schurcondition} follows from an identical argument replacing $T$ with $T^*$.

To address \eqref{localizationcondition}, we follow the steps as in the computation above to get
\begin{align*}
    \sup_{(a',b')\in\mathbb{R}^{n+1}_+}&w(a',b')^{-1}\int_{D((a',b'),R)^c}|\langle T\psi_{(a',b')},\psi_{(a,b)}\rangle|w(a,b)\,d\lambda(a,b)\\
    &=\sup_{(a',b')\in\mathbb{R}^{n+1}_+}\int_{D((1,0),R)^c}|\langle T_{(a',b')}\psi,\psi_{(a,b)}\rangle|w(a,b)\,d\lambda(a,b)\\
    &\lesssim \iint_{\left\{(a,b)\in\mathbb{R}^{n+1}_+:\, a\ge 1, \,\, |b|\leq a\right\}\setminus D((1,0),R)}\frac{1}{a^{n+1+\delta}}\,dadb\\
    &\quad\quad+\iint_{\left\{(a,b)\in\mathbb{R}^{n+1}_+:\, a\ge 1, \,\, |b|> a\right\}\setminus D((1,0),R)}\frac{1}{a|b|^{n+\delta}}\,dadb\\
    &\quad\quad+\iint_{\left\{(a,b)\in\mathbb{R}^{n+1}_+:\, 0<a<1, \,\, |b|\leq 1\right\}\setminus D((1,0),R)} \frac{1}{a^{1-\delta}}\,dadb\\
    &\quad\quad+\iint_{\left\{(a,b)\in\mathbb{R}^{n+1}_+:\, 0<a<1, \,\, |b|>1\right\}\setminus D((1,0),R)}\frac{1}{a^{1-\delta}|b|^{n+\delta}}\,dadb
\end{align*}
for any $R>0$. Since each of the four integrals above is the tail of a convergent integral, the above expression can be made arbitrarily small by choosing $R$ sufficiently large. 
This, and an identical argument with $T^*$ in place of $T$, establishes \eqref{localizationcondition}.
\end{proof}

We next address the compactness of the paraproduct operators. Let $\varphi \in \mathcal{D}(\mathbb{R}^n)$ be radial, non-increasing, and satisfy $\varphi(x)=1$ for $x \in B(0,\frac{1}{2})$ and $\varphi(x)=0$ for $x \not \in B(0,1)$. The paraproduct operator with symbol $\beta$ is given by
$$
    P_{\beta}f:=\int_{\mathbb{R}^{n+1}_+}\langle f,\widetilde{\varphi}_{(a,b)}\rangle\langle \beta,\psi_{(a,b)}\rangle\psi_{(a,b)}\,d\lambda(a,b),
$$
where $\widetilde{\varphi}_{(a,b)}:=a^{-n}\varphi\left(\frac{\cdot - b}{a}\right)$.


\begin{thm}\label{ParaproductCompactness}
    If $\beta \in \text{CMO}(\mathbb{R}^n)$, then $P_{\beta}$ is compact on $L^2(\mathbb{R}^n)$.
\end{thm}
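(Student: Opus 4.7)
The plan is to realize $P_\beta$ as an operator-norm limit of compact truncated paraproducts
\[
    P_{\beta,R} f := \int_{D((1,0),R)} \langle f, \widetilde\varphi_{(a,b)}\rangle\, \langle \beta, \psi_{(a,b)}\rangle\, \psi_{(a,b)}\,d\lambda(a,b),
\]
and then invoke that compact operators form a norm-closed ideal. The core estimate comes from the Parseval frame synthesis bound on $\{\psi_{(a,b)}\}$ applied to the coefficients $c_{(a,b)} = \langle f,\widetilde\varphi_{(a,b)}\rangle\langle\beta,\psi_{(a,b)}\rangle$, together with Lemma \ref{SteinLemma} and the $L^2$-boundedness of $M$: for any Borel $E\subseteq\R^{n+1}_+$,
\[
    \left\|\int_E c_{(a,b)}\,\psi_{(a,b)}\,d\lambda(a,b)\right\|_{L^2}^2 \leq \int_E |\langle f,\widetilde\varphi_{(a,b)}\rangle|^2\,d\mu_\beta \lesssim \|f\|_{L^2}^2\,\|C\nu_E\|_{L^\infty(\R^n)},
\]
where $d\mu_\beta:=|\langle\beta,\psi_{(a,b)}\rangle|^2\,d\lambda$ and $\nu_E:=\mathbf{1}_E\,\mu_\beta$. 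Taking $E=\R^{n+1}_+$ recovers the boundedness of $P_\beta$ from $\beta\in\text{BMO}(\R^n)$, while taking $E=D((1,0),R)^c$ gives $\|P_\beta-P_{\beta,R}\|_{op}^2\lesssim\|C\nu_R\|_{L^\infty(\R^n)}$ with $\nu_R:=\mathbf{1}_{D((1,0),R)^c}\mu_\beta$.

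Compactness of $P_{\beta,R}$ for fixed $R$ is the easy step. I would factor $P_{\beta,R}=B\circ M\circ A_R$, where $A_R\colon L^2(\R^n)\to L^2(D((1,0),R),\lambda)$ is the analysis map $f\mapsto\langle f,\widetilde\varphi_{(a,b)}\rangle$, $M$ is multiplication by $\langle\beta,\psi_{(a,b)}\rangle$ (bounded on $D((1,0),R)$ because $\beta\in L^2_{\text{loc}}(\R^n)$ and the relevant $\psi_{(a,b)}$ share a fixed compact Euclidean support), and $B$ is the Parseval synthesis against $\{\psi_{(a,b)}\}$, a contraction. The operator $A_R$ is Hilbert--Schmidt, since its kernel $(a,b,x)\mapsto \widetilde\varphi_{(a,b)}(x)$ has
\[
    \|A_R\|_{HS}^2 = \|\varphi\|_{L^2}^2\int_{D((1,0),R)} \frac{da\,db}{a^{2n+1}} < \infty,
\]
thanks to $D((1,0),R)$ being hyperbolically bounded, which forces $a$ into a compact subinterval of $(0,\infty)$. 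Hence $P_{\beta,R}$ is compact.

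The main obstacle is showing $\|C\nu_R\|_{L^\infty(\R^n)}\to 0$ as $R\to\infty$. Fix $\varepsilon>0$ and use the vanishing Carleson property of $\mu_\beta$ to pick $R_1$ with $\mu_\beta(T(B(b',a')))\leq\varepsilon|B(b',a')|$ whenever $(a',b')\in D((1,0),R_1)^c$. Take $R\gg R_1$ and fix any Euclidean ball $B(b,a)$. If $(a,b)\in D((1,0),R_1)^c$, then $\nu_R(T(B(b,a)))\leq\mu_\beta(T(B(b,a)))\leq\varepsilon|B(b,a)|$ directly. Otherwise $(a,b)\in D((1,0),R_1)$, and the triangle inequality gives $T(B(b,a))\cap D((1,0),R)^c\subseteq T(B(b,a))\cap D((a,b),R-R_1)^c$; since the hyperbolic distance of a tent-point $(a',b')$ from $(a,b)$ is comparable to $\log(a/a')$, this deep portion lies in $T(B(b,a))\cap\{a'\lesssim ae^{-(R-R_1)}\}$. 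A bounded-overlap cover by $\lesssim e^{n(R-R_1)}$ tents $T(B(b_j,2\eta a))$ with $\eta:=Ce^{-(R-R_1)}$ handles the region, since the smallness of the scale $2\eta a$ forces $(2\eta a,b_j)\in D((1,0),R_1)^c$ once $R>3R_1+O(1)$; summing the vanishing bound over $j$ and using $\sum_j|B(b_j,2\eta a)|\lesssim|B(b,a)|$ yields $\nu_R(T(B(b,a)))\lesssim\varepsilon|B(b,a)|$ uniformly. Consequently $\|P_\beta-P_{\beta,R}\|_{op}\to 0$, and $P_\beta$ is compact.
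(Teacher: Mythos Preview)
Your argument is correct and takes a genuinely different route from the paper's. The paper proves compactness sequentially: given $f_k\to 0$ weakly, it splits the defining integral for $\|P_\beta f_k\|_{L^2}^2$ over a truncated tent $\widetilde T(B)=\{(a,b)\in T(B):a>a_0\}$ and its complement, handles the truncated tent by dominated convergence, and bounds the complement via Lemma~\ref{SteinLemma} after observing that the restricted measure $\mathbf 1_{\widetilde T(B)^c}\mu_\beta$ has small Carleson norm. You instead approximate $P_\beta$ in operator norm by the truncations $P_{\beta,R}$, prove each $P_{\beta,R}$ is compact through an explicit Hilbert--Schmidt factorization of the analysis map $f\mapsto\langle f,\widetilde\varphi_{(a,b)}\rangle$ on the hyperbolically bounded region $D((1,0),R)$, and then show $\|P_\beta-P_{\beta,R}\|\to 0$ by combining the synthesis contraction for the Parseval frame $\{\psi_{(a,b)}\}$ with Lemma~\ref{SteinLemma} applied to $\nu_R=\mathbf 1_{D((1,0),R)^c}\mu_\beta$. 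Both proofs ultimately hinge on the same Carleson embedding and the vanishing Carleson property of $\mu_\beta$; the paper's tent-shaped truncation makes the tail Carleson estimate essentially immediate (it is asserted in one line), whereas your hyperbolic-ball truncation forces you to run the explicit bounded-overlap covering argument, which you carry out correctly. Your approach has the advantage of being more quantitative---it exhibits $P_\beta$ as a norm limit of concrete Hilbert--Schmidt operators---while the paper's sequential argument is shorter and avoids the factorization step entirely.
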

\begin{proof}
Let $\{f_k\}_{k=1}^{\infty}\subseteq L^2(\mathbb{R}^n)$ converge to $0$ weakly. We show that $\|P_{\beta}f_k\|_{L^2(\mathbb{R}^n)}\rightarrow 0$. Let $\varepsilon>0$. Use the assumption $\beta \in \text{CMO}(\mathbb{R}^n)$ to choose $R>0$ such that $\frac{\mu_{\beta}(T(B(b,a)))}{|B(b,a)|}<\varepsilon$ whenever $(a,b) \in D((1,0),R)^c$. Let $r>0$ be so that $B=B(0,r)$ satisfies $T(B)\supseteq D((1,0),R)$ and put $\widetilde{T}(B):= \{(a,b) \in T(B) : a>a_0\}$, where $a_0$ is chosen so that $d((1,0),(2a_0,0))=R$. Then
\begin{align*}
    \|P_{\beta}f_k&\|_{L^2(\mathbb{R}^n)}^2=\int_{\mathbb{R}^{n+1}_+}|\langle f_k,\widetilde{\varphi}_{(a,b)}\rangle|^2|\langle \beta,\psi_{(a,b)}\rangle|^2\,d\lambda(a,b)\\
    &=\int_{\widetilde{T}(B)}|\langle f_k,\widetilde{\varphi}_{(a,b)}\rangle|^2|\langle \beta,\psi_{(a,b)}\rangle|^2\,d\lambda(a,b)+\int_{\widetilde{T}(B)^c}|\langle f_n,\widetilde{\varphi}_{(a,b)}\rangle|^2|\langle \beta,\psi_{(a,b)}\rangle|^2\,d\lambda(a,b).
\end{align*}

To handle the first term, note that $f_k\rightarrow 0$ weakly implies that $\{f_k\}$ is bounded in $L^2(\mathbb{R}^n)$, and so $|\langle f_k,\widetilde{\varphi}_{(a,b)}\rangle|^2\leq a^{-n/2}\|f_k\|_{L^2(\mathbb{R}^n)}\|\varphi_{(a,b)}\|_{L^2(\mathbb{R}^n)}$ is bounded independently of $k\in\mathbb{N}$ and $(a,b) \in \widetilde{T}(B)$. Also, since $\mu_{\beta}$ is a Carleson measure, $\int_{T(B)}|\langle \beta,\psi_{(a,b)}\rangle|^2\,d\lambda(a,b)\leq|B|<\infty$. The dominated convergence theorem thus gives that 
$$
    \lim_{k\rightarrow\infty}\int_{\widetilde{T}(B)}|\langle f_k,\widetilde{\varphi}_{(a,b)}\rangle|^2|\langle \beta,\psi_{(a,b)}\rangle|^2\,d\lambda(a,b)= \int_{\widetilde{T}(B)}\lim_{k\rightarrow\infty}|\langle f_k,\widetilde{\varphi}_{(a,b)}\rangle|^2|\langle \beta,\psi_{(a,b)}\rangle|^2\,d\lambda(a,b) =0.
$$

We now address the second term. Define $\mu_{\widetilde{T}(B)^c}$ by 
$$
    d\mu_{\widetilde{T}(B)^c}(a,b)=\chi_{\widetilde{T}(B)^c}(a,b)|\langle \beta,\psi_{(a,b)}\rangle|^2\,d\lambda(a,b).
$$
By choice of $B$ and $a_0$, we have that $\|C(\mu_{\widetilde{T}(B)^c})\|_{L^{\infty}(\mathbb{R}^n)}\lesssim \varepsilon$.  By Lemma \ref{SteinLemma}, we have
\begin{align*}
	\int_{\widetilde{T}(B)^c}|\langle f_k,\widetilde{\varphi}_{(a,b)}\rangle|^2&|\langle \beta,\psi_{(a,b)}\rangle|^2\,d\lambda(a,b) \lesssim \int_{\mathbb{R}^n}Mf_k(x)^2C(\mu_{\widetilde{T}(B)^c})(x)\,dx\\
&\leq \varepsilon\|Mf_k\|_{L^2(\mathbb{R}^n)} \lesssim \varepsilon \|f_k\|_{L^2(\mathbb{R}^n)}\lesssim \varepsilon.
\end{align*}
Thus $\lim_{k\rightarrow\infty}\|P_{\beta}f_k\|_{L^2(\mathbb{R}^n)} = 0$, completing the proof.
\end{proof}

\begin{proof}[Proof of Theorem \ref{CZOCompactness}]
The necessity of $T1, T^*1\in\text{CMO}(\mathbb{R}^n)$ follows from the fact that compact $T: L^2(\mathbb{R}^n)\to L^2(\mathbb{R}^n)$ extends to a compact operator $T: L^\infty(\mathbb{R}^n)\to \text{BMO}(\mathbb{R}^n)$ and the fact that each bounded Calder\'on-Zygmund operator takes $C_0(\R^n)$ functions into $\text{CMO}(\mathbb{R}^n)$. The  weak compactness condition is necessary by the argument following Definition \ref{weakcompdef}. 

We now prove sufficiency. Write
$$
    T=S+P_{T1}+P_{T^*1}^*.
$$
Then $S$ is a weakly compact Calder\'on-Zygmund operator satisfying $S1=S^*1=0$, so $S$ is localized by Theorem \ref{CZOlocalization} and therefore compact on $L^2(\mathbb{R}^n)$ by Theorem \ref{Generalcompactness}. The paraproducts $P_{T1}$ and $P_{T^*1}^*$ are both compact by Theorem \ref{ParaproductCompactness} since $T1,T^*1 \in \text{CMO}(\mathbb{R}^n)$.
\end{proof}

\begin{bibdiv}
\begin{biblist}

\bib{CCLLYZ2023}{article}{
title={A compact $T1$ theorem for singular integrals associated with Zygmund dilation},
author={M. Cao},
author={J. Chen},
author={Z. Li},
author={F. Liao},
author={K. Yabuta},
author={J. Zhang},
journal={Arxiv e-prints: 2307.13932},
date={2023}
}

\bib{CYY2023}{article}{
title={A compact extension of Journ\'e's $T1$ theorem on product spaces}, 
author={M. Cao},
author={K. Yabuta},
author={D. Yang},
journal={Arxiv e-prints: 2303:10965},
date={2023}
}

\bib{CoifmanMeyer}{book}{
    title={Wavelets. Calder\'on-Zygmund and multilinear operators},
    author={R. Coifman},
    author={Y. Meyer},
    series={Cambridge Studies in Advanced Mathematics, 48},
    publisher={Cambridge University Press},
    address={Cambridge},
    date={1997},
    pages={xx+315},
    ISBN={0-521-42001-6; 0-521-79473-0},
    review={\MR{1456993}}
}

\bib{CW1977}{article}{
title={Extensions of Hardy spaces and their use in analysis},
author={R. Coifman},
author={G. Weiss},
journal={Bull. Amer. Math. Soc},
volume={83},
date={1977},
number={4},
pages={569--645},
review={\MR{0447954}}
}

\bib{DJ1984}{article}{
    title={A boundedness criterion for generalized Calder\'on-Zygmund operators},
    author={G. David},
    author={J.-L. Journ\'e},
    journal={Ann. of Math. (2)},
    volume={120},
    date={1984},
    number={2},
    pages={371--397},
    review={\MR{0763911}}
}

\bib{D1990}{article}{
title={The wavelet transform, time-frequency localization and signal analysis},
author={I. Daubechies},
journal={IEEE Trans. Inform. Theory},
volume={36},
date={1990},
number={5},
pages={961--1005},
review={\MR{1066587}}
}


\bib{DM2016}{article}{
title={Vanishing Carleson measures associated with families of multilinear operators},
author={Y. Ding},
author={T. Mei},
journal={J. Geom. Anal.},
volume={26},
date={2016},
number={2},
pages={1539--1559},
review={\MR{3472844}}
}

\bib{DT2015}{article}{
title={$L^p$ theory for outer measures and two themes of Lennart Carleson united},
author={Y. Do},
author={C. Thiele},
journal={Bull. Amer. Math. Soc. (N.S.)},
volume={52},
date={2015},
number={2},
pages={249--296},
review={\MR{3312633}}
}


\bib{DFG2002}{article}{
title={Compactness criteria in function spaces},
author={M. D\"orfler},
author={H. G. Feichtinger},
author={K. Gr\"ochenig},
journal={Colloq. Math.},
volume={94},
date={2002},
number={1},
pages={37--50},
review={\MR{1930200}}
}

\bib{FJR1978}{article}{
title={Potential techniques for boundary value problems on $C^1$-domains},
author={E. B. Fabes},
author={M. Jodeit},
author={N. M. Rivi\`ere},
journal={Acta Math.},
volume={141},
date={1978},
number={3-4},
pages={165--186},
review={\MR{0501367}}
}



\bib{HMT2010}{article}{
title={Singular integrals and elliptic boundary problems on regular Semmes-Kenig-Toro domains},
author={S. Hofmann},
author={M. Mitrea},
author={M. Taylor},
journal={Int. Math. Res. Not. IMRN},
date={2010},
number={14},
pages={2567--2865},
review={\MR{2669659}}
}

\bib{HL2021}{article}{
title={Extrapolation of compactness on weighted spaces},
author={T. P. Hyt\"onen},
author={S. Lappas},
date={2023},
volume={39},
journal={Rev. Mat. Iberoam.},
number={1},
pages={91--122},
review={\MR{4571600}}
}

\bib{MSWW2023}{article}{
title={Riesz-Kolmogorov type compactness criteria in function spaces with applications},
author={M. Mitkovski},
author={C. B. Stockdale},
author={N. A. Wagner},
author={B. D. Wick},
journal={Complex Anal. Oper. Theory},
volume={17},
date={2023},
number={3},
pages={Paper No. 40},
review={\MR{4569081}}
}

\bib{N1977}{article}{
title={Some properties of functions with bounded mean oscillation},
author={U. Neri},
journal={Studia Math.},
volume={61},
date={1977},
number={1},
pages={63--75},
review={\MR{0445210}}
}

\bib{OV2017}{article}{
title={Endpoint estimates for compact Calder\'on-Zygmund operators},
author={J-F. Olsen},
author={P. Villarroya},
journal={Rev. Mat. Iberoam.},
volume={33},
date={2017},
pages={1285–-1308},
review={\MR{3729600}}
}

\bib{PPV2017}{article}{
title={Endpoint compactness of singular integrals and perturbations of the Cauchy integral},
author={K-M. Perfekt},
author={S. Pott},
author={P. Villarroya},
journal={Kyoto J. Math.},
volume={57},
date={2017},
number={2},
pages={365--393},
review={\MR{3648054}}
}

\bib{S1993}{book}{
title={Harmonic analysis: real-variable methods, orthogonality, and oscillatory integrals},
author={E. M. Stein},
series={Princeton Mathematical Series, 43. Monographs in Harmonic Analysis, III},
publisher={Princeton University Press},
address={Princeton, NJ},
date={1993},
pages={xiv+695},
review={\MR{1232192}}
}

\bib{SVW2022}{article}{
title={Sparse domination results for compactness on weighted spaces},
author={C. B. Stockdale},
author={P. Villarroya},
author={B. D. Wick},
journal={Collect. Math.},
volume={73},
date={2022},
number={3},
pages={535--563},
review={\MR{4467913}}
}

\bib{U1978}{article}{
title={On the compactness of operators of Hankel type},
author={A. Uchiyama},
journal={Tohoku Math. J. (2)},
volume={30},
date={1978},
number={1},
pages={163--171},
review={\MR{0467384}}
}

\bib{V2015}{article}{
    title={A characterization for compactness of singular integrals},
    author={P. Villarroya},
    journal={J. Math. Pures Appl. (9)},
    volume={104},
    date={2015},
    number={3},
    pages={485--532},
    review={\MR{3383175}}
}

\bib{V2019}{article}{
title={A global $Tb$ theorem for compactness and boundedness of Calder\'on-Zygmund operators},
author={P. Villarroya},
journal={J. Math. Anal. Appl.},
volume={480},
date={2019},
number={1},
pages={123323, 41 pp},
review={\MR{3994907}}
}
\end{biblist}
\end{bibdiv}

\end{document}